\newcommand{\oset}[3][0ex]{%
  \mathrel{\mathop{#3}\limits^{
    \vbox to#1{\kern-2\ex@
    \hbox{$\scriptstyle#2$}\vss}}}}
\newcommand{\old}[1]{\textcolor{black}{#1}}
\begin{document}
\title{An approximate solution to Erd\Humlaut{o}s'  maximum modulus points~problem}
\author%[A. Gl\"ucksam {\small and} L. Pardo-Sim\'{o}n]
{Adi Gl\"ucksam {\small and} Leticia Pardo-Sim\'{o}n }

\date{}

\maketitle

\begin{abstract}
In this note we investigate the asymptotic behavior of the number of maximum modulus points, of an entire function, sitting in a disc of radius $r$. In 1964, Erd\Humlaut{o}s asked whether there exists a non-monomial function so that this quantity is unbounded? tends to infinity? In 1968 Herzog and Piranian constructed an entire map for which it is unbounded. Nevertheless, it is still unknown today whether it is possible that it tends to infinity or not. In this paper, we construct a transcendental entire function that is arbitrarily close to satisfying this property, thereby giving the strongest evidence supporting a positive answer to this question.
\end{abstract}
\let\thefootnote\relax\footnote{2020 Mathematics Subject Classification. Primary 30D15. Key words: entire functions, maximum modulus, Erd\Humlaut{o}s' problem.}
\vspace{-3em}

\section{Introduction}
Let $f$ be an entire function, and recall the \emph{maximum modulus function} defined for every $r\ge0$ by
$$
M(r)=M_f(r) := \underset{{|z| = r}}\max \abs{f(z)}.
$$
The \textit{maximum modulus set} is the set of points where $f$ achieves its maximum modulus, i.e.,
$$
 \{ |f(z)| = M(|z|) \}.
$$
Every point in the maximum modulus set is called a \textit{maximum modulus point}. If $f$ is a monomial, then the maximum modulus set is the entire plane. Otherwise, the maximum modulus set consists of a union of closed curves, which are analytic except at their endpoints; see \cite[Theorem~10]{valironlectures} or \cite{Blumenthal}, and might even be a single point; \cite{tyler,letidavenew}. Following uniqueness of entire functions, unless $f$ is a monomial, for each $r>0$, the intersection of the maximum modulus set with $\bset{\abs z=r}$  contains finitely many maximum modulus points of modulus $r$; see e.g. \cite[II.3]{valironlectures}. Let us denote by $v(r)$ the number of such points; that is,
\begin{equation*}
v_f(r)=v(r):= \# (\{ \vert z \vert=r \text{ and } \abs{f(z)}=M(\abs z)\}).
\end{equation*}
In 1964, P. Erd\Humlaut{o}s asked about the limiting behaviour of $v(r)$ as $r$ goes to infinity. More precisely, he posed the following questions:
\begin{quest}[Erd\Humlaut{o}s]
Let $f$ be a non-monomial entire function.
	\begin{enumerate}[label=(\alph*)]
		\item \label{item_Erdos1} Can the function $v$ be unbounded, i.e., $\limitsup r\infty v(r)=\infty$?
		\item \label{item_Erdos2} Can the function $v$ tend to $\infty$, i.e., $\limitinf r\infty v(r)=\infty$?
	\end{enumerate}
\end{quest}

The first time this problem seems to have appeared in the literature was in Hayman's book from 1967 on open problems in function theory \cite{Hayman_book_original}; also in its 50th anniversary edition as \cite[Problem 2.16]{Hayman_book_50}. The second part of the question was later also attributed to Clunie, appearing in Anderson, Barth and Brannan's compilation of open problems in complex analysis from 1977, \cite[Problem 2.49]{Anderson_problems}, as well as in \cite[Problem 2.49]{Hayman_book_50}.

In 1968, Herzog and Piranian answered  \ref{item_Erdos1} positively by constructing an entire function $f$ for which $v(n)=n$ for each $n\in \N$; \cite{piranian}. Their technique relies on a sequence of refinements that do not seem to provide any information nor control on the value of $v(r)$ for $r\notin \N$, and so, in their case, nothing can be deduced for \ref{item_Erdos2}. 

In fact, \ref{item_Erdos2} has eluded an answer to this date. Hayman showed in \cite{Hayman_origin} that, near the origin,  for any entire $f$, the maximum modulus set consists of a collection of disjoint analytic curves and provided an upper bound for the number of these curves in terms of their Taylor expansion; see \cite{mio_Vasso_Dave} for a refinement of Hayman's result. If $f$ is a polynomial, then, by considering its reciprocal, we see that analogous results hold for the structure of the maximum modulus set near infinity and, in particular, $v(r)$ is constant for all $r$ large enough; see \cite[Prop. 3.3]{letidavenew} for details. However, to the best of our knowledge, Herzog and Piranian's is the unique example in the literature of a (transcendental) entire function for which $v(r)$ is unbounded, and, a priori, it is not clear what to expect for the limiting behaviour of $v(r)$ in the transcendental case.

In this paper, we construct a transcendental entire function $f$ that is arbitrarily close to satisfying \ref{item_Erdos2}. More precisely, for each $r,\eps>0$, let us denote by $v(r,\eps)$ the number of connected components of the intersection of $\bset{\abs z=r}$ with the {\it$\eps$-approximately maximum modulus set} , i.e., 
\begin{equation}\label{eq_set_intervals}
\{\vert f(z)\vert>M(\abs z) -\eps\}. 
\end{equation}
Similarly, for every $r,\eps>0$ we denote by $w(r,\eps)$ the number of connected components of the intersection of $\bset{\abs z=r}$ with the {\it$\eps$-approximately zero set}, i.e.,  $\{\vert f(z)\vert<\eps\}$.
\begin{thm}\label{thm:max_min_mod}
	There exists an entire function $f$ satisfying that for every $\eps>0$
$$
	\limit r\infty v(r,\eps)=\infty \;\; \text{ and }\;\;\limit r\infty w(r,\eps)=\infty.
$$
\end{thm}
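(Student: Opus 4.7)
The plan is to construct $f$ as an infinite product of polynomials raised to large powers,
\[
f(z) \;=\; \prod_{n=1}^{\infty}\!\left(1-\left(\tfrac{z}{R_n}\right)^{d_n}\right)^{m_n},
\]
for three sequences $R_n\nearrow\infty$, $d_n\to\infty$ and $m_n\to\infty$ to be chosen. Each factor $P_n$ vanishes with multiplicity $m_n$ at the $d_n$ points $R_n e^{2\pi ik/d_n}$, so on any circle $|z|=r$ close to $R_n$ the modulus $|P_n|^{m_n}$ exhibits $d_n$ sharp peaks and $d_n$ sharp valleys. A natural concrete choice is $R_n=n$, $d_n=n$, $m_n=c^n$ for a large absolute constant $c$: with these parameters the ``dominance windows'' $\{r:|r-R_n|\leq 1/2\}$ already tile a half-line $[1/2,\infty)$, and the convergence condition $\sum_n m_n(r/R_n)^{d_n}=\sum_n (cr/n)^n<\infty$ holds for every $r$, so $f$ is entire.

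For the local analysis I fix $n$ and $r$ in the window around $R_n$, parametrise $|z|=r$ by $\theta$, and use the identity
\[
|P_n(re^{i\theta})|^{2}\;=\;(1-T)^{2}+4T\sin^{2}(d_n\theta/2), \qquad T:=(r/R_n)^{d_n},
\]
which shows that $|P_n|$ has $d_n$ maxima of size $1+T$ and $d_n$ minima of size $|1-T|$, alternating along the circle. Since $T$ lies in a compact subinterval of $(0,\infty)$ by construction, raising to the $m_n$-th power turns these into sharp spikes with ratio $\bigl((1+T)/|1-T|\bigr)^{m_n}$. I would then verify that the remaining product $\prod_{k\neq n}|P_k|^{m_k}$ is nearly constant on $|z|=r$: factors with $k>n$ differ from $1$ by $O(m_k(r/R_k)^{d_k})$, while factors with $k<n$ behave as $(1+T_k)^{m_k}$ up to a $\theta$-dependent perturbation of relative size $O(T_k^{-1})$, and both contributions are super-exponentially small. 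Combining, the $d_n$ peaks of $|P_n|^{m_n}$ translate into $d_n$ genuine near-maximum arcs of $|f|$ on $|z|=r$, and the $d_n$ valleys into $d_n$ near-zero arcs; their angular widths tend to $1/d_n$ times a positive constant independent of $\eps$ as $m_n\to\infty$. This yields $v(r,\eps)\geq d_n$ and $w(r,\eps)\geq d_n$ throughout the $n$-th window, and since $d_n\to\infty$ and the windows cover a half-line, the required limits follow for every $\eps>0$.

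The principal obstacle is the simultaneous satisfaction of three competing constraints: (i) summability $\sum_n m_n(r/R_n)^{d_n}<\infty$ for every $r$; (ii) coverage $R_{n+1}/R_n\leq 1+O(1/d_n)$; and (iii) a lower bound $m_n\geq K\log M_f(R_n)$ for an absolute constant $K>1$, which is what guarantees that the near-zero and near-maximum widths are independent of $\eps$ in the limit. The third requirement is delicate because $\log M_f(R_n)$ depends recursively on $m_1,\ldots,m_{n-1}$, so the sequences must be fixed inductively; the key computation is that with $m_n=c^n$ one has $\log M_f(R_n)\sim c^{n+1}/(c-1)^{2}$, so the ratio $\log M_f(R_n)/m_n$ stabilises at $c/(c-1)^{2}$, which can be driven below $1/K$ by choosing $c$ sufficiently large. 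The most subtle part of the argument, which I expect to carry the real technical content, is the handling of radii $r$ lying between two consecutive dominance windows, where the factors $P_n$ and $P_{n+1}$ oscillate at incommensurate frequencies $d_n$ and $d_{n+1}$: a careful joint estimate is required to rule out destructive interference and confirm that the combined oscillation still produces at least $\max(d_n,d_{n+1})$ separated near-maximum and near-zero components on the circle.
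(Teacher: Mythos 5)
Your infinite-product approach is genuinely different from the paper's: the paper glues the building blocks $a_n\exp(z^{2^n})$ into \emph{disjoint sectors} using H\"ormander's $\bar\partial$-theorem, precisely so that at a given radius $r$ only one building block is relevant, plus an additive error that is explicitly controlled. Your construction instead multiplies all the blocks together and hopes that on each circle one factor dominates and the rest are ``nearly constant.'' That last claim is where the proposal breaks down.

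Take $r$ in the $n$-th window, $|r-R_n|\le 1/2$, with your choice $R_k=d_k=k$, $m_k=c^k$. Then $T_{n-1}=(r/R_{n-1})^{d_{n-1}}=(r/(n-1))^{n-1}\to e$, so on $|z|=r$ the factor $|P_{n-1}|^{m_{n-1}}=|1-T_{n-1}e^{id_{n-1}\theta}|^{m_{n-1}}$ ranges between $(T_{n-1}-1)^{m_{n-1}}$ and $(T_{n-1}+1)^{m_{n-1}}$, i.e.\ oscillates by a multiplicative factor of $\bigl((e+1)/(e-1)\bigr)^{m_{n-1}}\approx(2.16)^{c^{n-1}}$. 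This is gigantic, not a ``super-exponentially small'' perturbation; the correct order of the relative perturbation of $|P_k|^{m_k}$ is $O(m_k/T_k)$ after exponentiation, and for $k=n-1$ this is $\Theta(m_{n-1})=\Theta(c^{n-1})$ in the exponent, not $O(T_k^{-1})$. Because the $\eps$-approximate maximum modulus set is defined by the \emph{additive} condition $|f(z)|>M(r)-\eps$, and $M(r)=e^{\Theta(c^n)}$ is enormous, a peak of $|P_n|^{m_n}$ that sits where $|P_{n-1}|^{m_{n-1}}$ is not essentially at its own maximum produces a value of $|f|$ that is smaller than $M(r)$ by a huge multiplicative (hence also additive) margin. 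Only the handful of peaks of $|P_n|^{m_n}$ aligned with the few peaks of $\prod_{k<n}|P_k|^{m_k}$ can come within $\eps$ of $M(r)$, so you do not get $d_n$ components; you get $O(1)$. A symmetric problem afflicts the near-zero arcs when $r\neq R_n$: the valleys of $|P_n|^{m_n}$ are bounded below by $|1-T_n|^{m_n}$, which must be beaten by the product of the earlier factors, and whether $|f|<\eps$ there again hinges on the same uncontrolled interference.

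This interference is not a technical nuisance to be fixed at the end; it is the central obstruction, and indeed the paper points out that Herzog and Piranian's product-like refinement gives control of $v$ only at integer radii for exactly this kind of reason. The paper avoids it structurally: at any radius $r\in[\rho_n,\rho_{n+1}]$, the entire function is $\eps$-close to $a_ne_{2^n}$ inside the sector $S_n$ and is uniformly small (bounded by $3(1+r)^2\cdot 2^{-4n}$ or $3(1+r)^2\exp(\exp(7r))\ll M(r)$) outside it, so there is literally only one oscillating block to analyse, with an additive error of order $2^{-n}$. To make your product work you would need the sequences $(R_n,d_n,m_n)$ to decouple the oscillations of $P_{n-1}$ and $P_n$ on the common annulus, e.g.\ by forcing $T_{n-1}$ to be astronomically large or $m_{n-1}/m_n$ astronomically small on that annulus while still covering all radii with ``dominance windows''; as stated, the constraints (i)--(iii) you list do not address this, and I do not see how to reconcile window coverage $R_{n+1}/R_n\le 1+O(1/d_n)$ with suppressing the $P_{n-1}$ oscillation on the $n$-th window.
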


In fact, Theorem \ref{thm:max_min_mod} shows that there exists an entire $f$ such that for every $\eps>0$ and $N\in \N$, for all $r$ sufficiently large, the circle of radius $r$ contains at least $N$ arcs where $\vert f \vert$ attains values $\eps$-close to the maximum modulus separated by at least $N$ arcs where $\abs f$ is bounded by $\eps$. 

We prove Theorem \ref{thm:max_min_mod} by using an approximation technique based on H\"ormander's solution to $\overline\partial$-equations (see Theorem \ref{thm:Hormander} in Section 2). Similar approximation techniques were presented by Russakovskii in \cite{Russakovskii1993} and subsequently used by the first author in \cite{BGLS2017}, and \cite{Glucksam2017}, and with the second in \cite{Wandering2023}. 

Because our methods rely on approximation theory, it is theoretically possible that the number of intervals composing the intersection of $\bset{\abs z=r}$ with the $\eps$-approximately maximum modulus set, defined in (\ref{eq_set_intervals}), that actually contain a maximum modulus point, is uniformly bounded. We are therefore unable to conclude that \ref{item_Erdos2} holds for $f$. Still, we believe that our result strongly suggests that Erd\Humlaut{o}s' question \ref{item_Erdos2} should have a positive answer.

\begin{rmk} The function we construct in Theorem \ref{thm:max_min_mod} is of infinite lower order of growth, i.e., 
\begin{equation*}
\liminf_{r\to \infty}\frac{\log \log M(r)}{\log r}=\infty,
\end{equation*}
see estimates in the proof of Lemma \ref{lem_final}. In  \cite{Marchenko_conjecture_erdos}, Marchenko conjectured that for entire functions of finite lower order, the answer to \ref{item_Erdos2} should be negative. This supports the conjecture that any entire function  satisfying the property of Theorem \ref{thm:max_min_mod} should have infinite order, as in the example above.
\end{rmk}

\subsection*{Acknowledgments} This project originated from discussions that followed a talk given by the second author in the \textit{Complex Analysis online seminar series ``CAvid''}. We thank Rod Halburd for organizing it. We are also grateful to Jack Burkart and David Sixsmith for many helpful discussions, and to the referee, whose thoughtful and meticulous report has greatly improved this paper. This material is based upon work supported by the National Science Foundation under Grant No. 1440140, while the authors were in residence at the Mathematical Sciences Research Institute in Berkeley, California, during the winter semester of 2022.

\section{Definitions, notation, and preliminary results}
\subsection{The building blocks}

For every $n\in \N$, define the function $e_n\colon \C\to \C$ as 
\begin{equation*}
e_n(z):=\exp\bb{z^n}.
\end{equation*}
Note that this function satisfies that for every $r>0$, $v_{e_n}(r)=n$, while between every two maximum modulus points, there is an arc where $\abs{e_n}<1$.

We begin by providing an overview of our strategy to prove Theorem  \ref{thm:max_min_mod} (precise definitions will follow). We shall construct an entire function $f$ with the following property: There is a strictly increasing sequence of radii, $\{\rho_n\}$, and a sequence of sectors, $\bset{S_n}$, such that for all $n$ large enough, on the intersection of the sector $S_n$ with the annulus $\{z\colon \rho_n\leq \vert z \vert \leq \rho_{n+1}\}$, $f$ behaves approximately like a constant, $a_n$, times $e_{2^n}(z)$, while outside this set $\vert f\vert$  is strictly smaller than the maximum of $\vert f\vert$ inside the intersection. Combining this property with the observation we made on $\vert e_{2^n}\vert $ and carefully controlling the errors, we will show that Theorem \ref{thm:max_min_mod} holds for $f$.

\begin{figure}[htp]
	\centering
\begingroup%
\makeatletter%
\providecommand\color[2][]{%
	\errmessage{(Inkscape) Color is used for the text in Inkscape, but the package 'color.sty' is not loaded}%
	\renewcommand\color[2][]{}%
}%
\providecommand\transparent[1]{%
	\errmessage{(Inkscape) Transparency is used (non-zero) for the text in Inkscape, but the package 'transparent.sty' is not loaded}%
	\renewcommand\transparent[1]{}%
}%
\providecommand\rotatebox[2]{#2}%
\newcommand*\fsize{\dimexpr\f@size pt\relax}%
\newcommand*\lineheight[1]{\fontsize{\fsize}{#1\fsize}\selectfont}%
\ifx\svgwidth\undefined%
\setlength{\unitlength}{266.45669291bp}%
\ifx\svgscale\undefined%
\relax%
\else%
\setlength{\unitlength}{\unitlength * \real{\svgscale}}%
\fi%
\else%
\setlength{\unitlength}{\svgwidth}%
\fi%
\global\let\svgwidth\undefined%
\global\let\svgscale\undefined%
\makeatother%
\begin{picture}(1,0.59575531)%
	\lineheight{1}%
	\setlength\tabcolsep{0pt}%
	\put(0,0){\includegraphics[width=\unitlength,page=1]{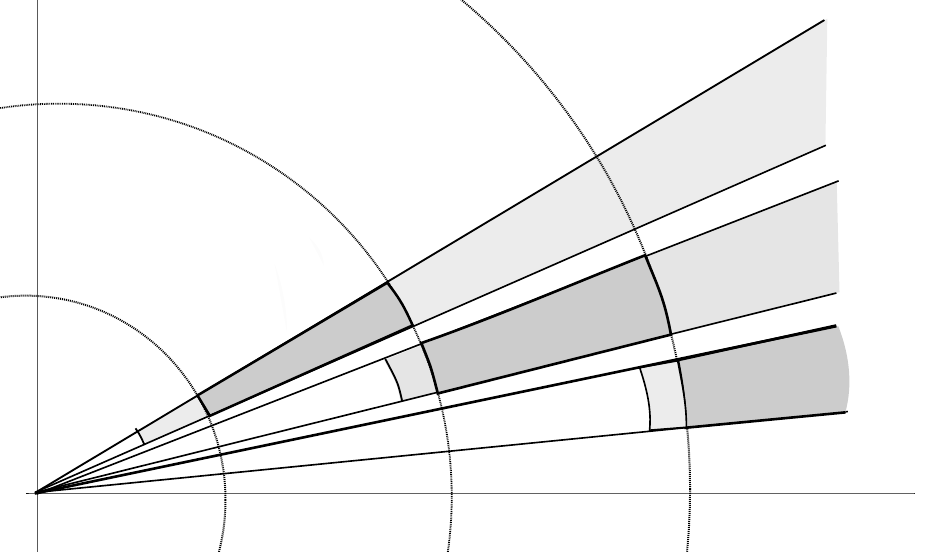}}%
	\put(0.72207902,0.42449194){\color[rgb]{0,0,0}\makebox(0,0)[lt]{\lineheight{1.25}\smash{\begin{tabular}[t]{l}$\fontsize{9pt}{1em}S_{n-1}$\end{tabular}}}}%
	\put(0.79989477,0.17501065){\color[rgb]{0,0,0}\makebox(0,0)[lt]{\lineheight{1.25}\smash{\begin{tabular}[t]{l}$\fontsize{9pt}{1em}f\approx a_{n+1}e_{2^{n+1}}$\end{tabular}}}}%
	\put(0.78412507,0.30889424){\color[rgb]{0,0,0}\makebox(0,0)[lt]{\lineheight{1.25}\smash{\begin{tabular}[t]{l}$\fontsize{9pt}{1em}S_{n}$\end{tabular}}}}%
	\put(0.49245807,0.03012839){\color[rgb]{0,0,0}\makebox(0,0)[lt]{\lineheight{1.25}\smash{\begin{tabular}[t]{l}$\fontsize{9pt}{1em}\rho_{n}$\end{tabular}}}}%
	\put(0.25081867,0.03008849){\color[rgb]{0,0,0}\makebox(0,0)[lt]{\lineheight{1.25}\smash{\begin{tabular}[t]{l}$\fontsize{9pt}{1em}\rho_{n-1}$\end{tabular}}}}%
	\put(0.74918191,0.02728446){\color[rgb]{0,0,0}\makebox(0,0)[lt]{\lineheight{1.25}\smash{\begin{tabular}[t]{l}$\fontsize{9pt}{1em}\rho_{n+1}$\end{tabular}}}}%
	\put(0.51020954,0.22281242){\color[rgb]{0,0,0}\makebox(0,0)[lt]{\lineheight{1.25}\smash{\begin{tabular}[t]{l}$\fontsize{9pt}{1em}f\approx a_ne_{2^n}$\end{tabular}}}}%
\end{picture}%
\endgroup%
	\caption{Schematic of the action of $f$.  The (infinite) sectors $(S_n)$ are depicted in light gray. Each of them contains a dark-gray region, where $f$ acts approximately like the map $a_n\cdot e_{2^n}$, and so the maximum modulus properties of $a_n\cdot e_{2^n}$ are transferred to $f$, up to an error.}
	\label{fig:idea}
\end{figure}

We will obtain the function $f$ as the limit of a sequence of entire functions, $\{f_n\}$, defined inductively. Roughly speaking, $f_n$ will act, up to some error, as follows: It will agree with $f_{n-1}$ in a region $P_n$, where $P$ stands for ``past'', will be defined as $a_n\cdot e_{2^{n}}$ in the two sectors $S_n$, and will be equal to $0$ in a sector $F_n$ around the real axis, where $F$ stands for ``future''. The region $P_n$ will contain the ball of radius $r_n\approx \rho_n+3$, and this property will be used to show uniform convergence of the sequence $\{f_n\}$. The regions $P_n$, $S_n$, and $F_n$ will be disjoint and their union will be \textit{ almost} equal to the whole complex plane; see Figure~\ref{fig:pasting_zones} below. An important feature of our construction is that $$(S_m\cup F_m)\subset F_{n-1} \quad \text{ for all }\quad m\geq n.$$
In a sense, we are pasting three different holomorphic functions, $f_{n-1}(z), a_ne_{2^n}(z)$ , and $0$, in three disjoint regions, $ P_n, S_n ,$ and $F_n$. It is not surprising that our techniques require some `glueing areas' to allow smoothness. These `glueing areas' are the sets $B_n$ and $T_n$, defined below, which naturally must contain $\mathbb C\setminus (P_n\cup S_n\cup F_n)$.

For simplicity, our sets will be symmetric with respect to the $x$-axis.
	
\begin{figure}[htp]
	\centering
\begingroup%
\makeatletter%
\providecommand\color[2][]{%
	\errmessage{(Inkscape) Color is used for the text in Inkscape, but the package 'color.sty' is not loaded}%
	\renewcommand\color[2][]{}%
}%
\providecommand\transparent[1]{%
	\errmessage{(Inkscape) Transparency is used (non-zero) for the text in Inkscape, but the package 'transparent.sty' is not loaded}%
	\renewcommand\transparent[1]{}%
}%
\providecommand\rotatebox[2]{#2}%
\newcommand*\fsize{\dimexpr\f@size pt\relax}%
\newcommand*\lineheight[1]{\fontsize{\fsize}{#1\fsize}\selectfont}%
\ifx\svgwidth\undefined%
\setlength{\unitlength}{334.48818898bp}%
\ifx\svgscale\undefined%
\relax%
\else%
\setlength{\unitlength}{\unitlength * \real{\svgscale}}%
\fi%
\else%
\setlength{\unitlength}{\svgwidth}%
\fi%
\global\let\svgwidth\undefined%
\global\let\svgscale\undefined%
\makeatother%
\begin{picture}(1,1.4406865)%
	\lineheight{1}%
	\setlength\tabcolsep{0pt}%
	\put(0,0){\includegraphics[width=\unitlength,page=1]{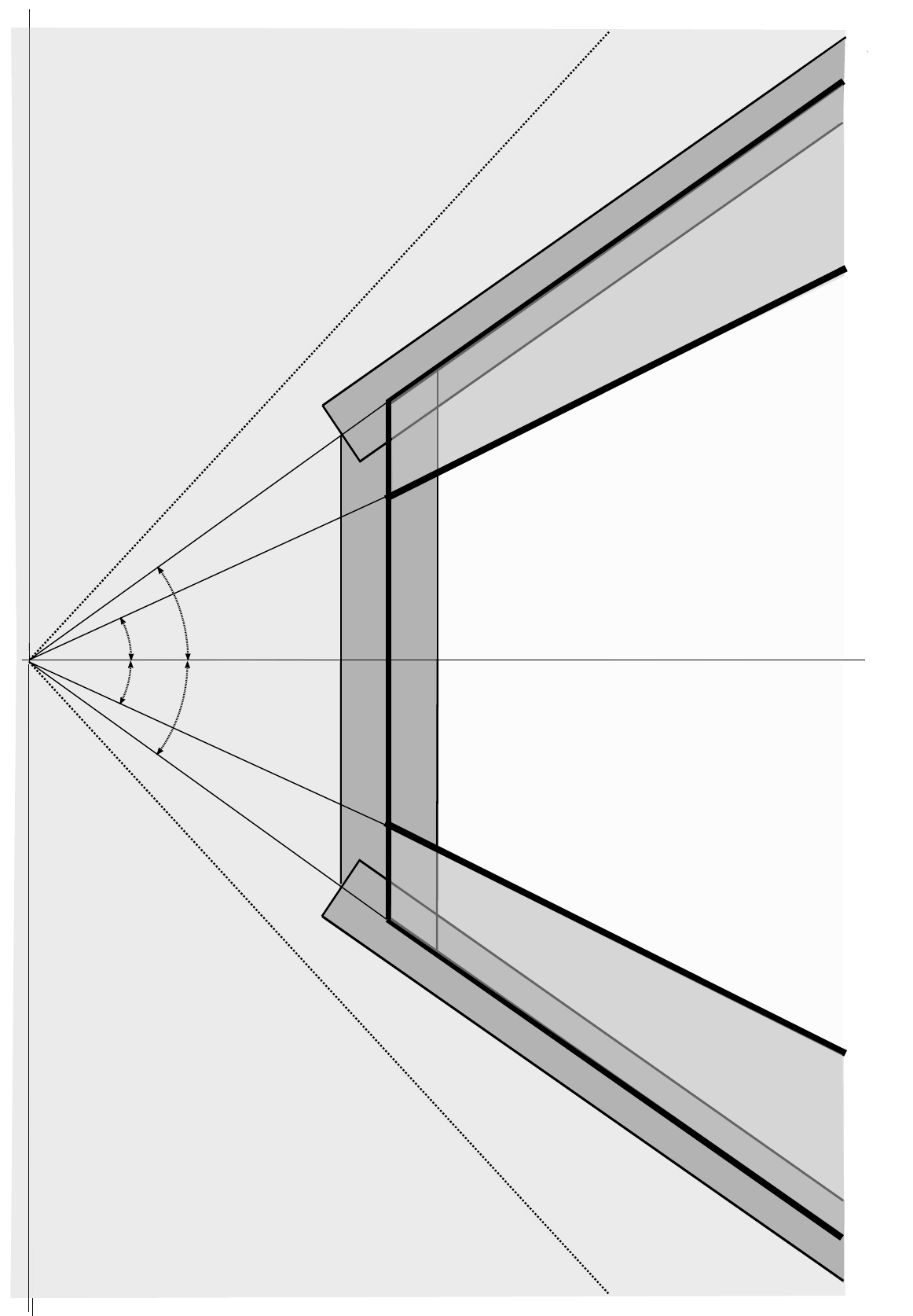}}%
	\put(0.92819124,0.03643731){\color[rgb]{0,0,0}\makebox(0,0)[lt]{\lineheight{1.25}\smash{\begin{tabular}[t]{l}$\fontsize{9pt}{1em}T_n$\end{tabular}}}}%
	\put(0.39259868,0.68164096){\color[rgb]{0,0,0}\makebox(0,0)[lt]{\lineheight{1.25}\smash{\begin{tabular}[t]{l}$\fontsize{8pt}{1em}r_n\!+\!\frac12$\end{tabular}}}}%
	\put(0.43515227,0.8296519){\color[rgb]{0,0,0}\makebox(0,0)[lt]{\lineheight{1.25}\smash{\begin{tabular}[t]{l}$\fontsize{9pt}{1em}\beta_n$\end{tabular}}}}%
	\put(0.93300686,1.14075523){\color[rgb]{0,0,0}\makebox(0,0)[lt]{\lineheight{1.25}\smash{\begin{tabular}[t]{l}$\fontsize{9pt}{1em}\gamma^+_n$\end{tabular}}}}%
	\put(0.93337684,1.33785807){\color[rgb]{0,0,0}\makebox(0,0)[lt]{\lineheight{1.25}\smash{\begin{tabular}[t]{l}$\fontsize{9pt}{1em}\ell_n^+$\end{tabular}}}}%
	\put(0.93364205,1.38514502){\color[rgb]{0,0,0}\makebox(0,0)[lt]{\lineheight{1.25}\smash{\begin{tabular}[t]{l}$\fontsize{9pt}{1em}T_n$\end{tabular}}}}%
	\put(0.43393809,0.73549277){\color[rgb]{0,0,0}\makebox(0,0)[lt]{\lineheight{1.25}\smash{\begin{tabular}[t]{l}$\fontsize{9pt}{1em}B_n$\end{tabular}}}}%
	\put(0.21321566,1.10344024){\color[rgb]{0,0,0}\makebox(0,0)[lt]{\lineheight{1.25}\smash{\begin{tabular}[t]{l}$\fontsize{9pt}{1em}P_n$\end{tabular}}}}%
	\put(0.72151642,1.10501154){\color[rgb]{0,0,0}\makebox(0,0)[lt]{\lineheight{1.25}\smash{\begin{tabular}[t]{l}$\fontsize{9pt}{1em}S_n$\end{tabular}}}}%
	\put(0.75033208,0.28189278){\color[rgb]{0,0,0}\makebox(0,0)[lt]{\lineheight{1.25}\smash{\begin{tabular}[t]{l}$\fontsize{9pt}{1em}S_n$\end{tabular}}}}%
	\put(0.71991526,0.78614663){\color[rgb]{0,0,0}\makebox(0,0)[lt]{\lineheight{1.25}\smash{\begin{tabular}[t]{l}$\fontsize{9pt}{1em}F_n$\end{tabular}}}}%
	\put(0.55377839,1.38091308){\color[rgb]{0,0,0}\rotatebox{0.13015559}{\makebox(0,0)[lt]{\lineheight{1.25}\smash{\begin{tabular}[t]{l}$\fontsize{9pt}{1em}F_{n-1}$\end{tabular}}}}}%
	\put(0.20692758,0.75473602){\color[rgb]{0,0,0}\makebox(0,0)[lt]{\lineheight{1.25}\smash{\begin{tabular}[t]{l}$\fontsize{7pt}{1em}t_n+\Theta_n$\end{tabular}}}}%
	\put(0.14973445,0.73434909){\color[rgb]{0,0,0}\makebox(0,0)[lt]{\lineheight{1.25}\smash{\begin{tabular}[t]{l}$\fontsize{7pt}{1em}t_n$\end{tabular}}}}%
	\put(0.92640912,0.27788836){\color[rgb]{0,0,0}\makebox(0,0)[lt]{\lineheight{1.25}\smash{\begin{tabular}[t]{l}$\fontsize{9pt}{1em}\gamma^-_n$\end{tabular}}}}%
	\put(0.93137766,0.06852884){\color[rgb]{0,0,0}\makebox(0,0)[lt]{\lineheight{1.25}\smash{\begin{tabular}[t]{l}$\fontsize{9pt}{1em}\ell_n^-$\end{tabular}}}}%
\end{picture}%
\endgroup%
	\caption{In different shades of gray, some of the sets and parameters involved in the construction of the function $f_n$. In bold, the lines $\beta_n$, $\ell_n^{\pm}$ and $\gamma^{\pm}_n$.}
	\label{fig:pasting_zones}
\end{figure}

We will frequently use the following notation: Given a set $U\subset\C$ and $\delta>0$ we let
$$
U^{-\delta}:=\bset{z \colon \dist(z, \C\setminus U)>\delta},
$$
i.e., the set $U^{-\delta}$ is the set of all points in $U$ which are separated from the boundary of $U$ by at least $\delta$. Similarly, we define
$$
U^{+\delta}:=\bset{z \colon \dist(z, U)<\delta},
$$
i.e., the set $U^{+\delta}$ is a `fattening' or `padding' of the set $U$ by $\delta$.

We now provide precise definitions of the sets and parameters used in the construction, with the definitions above in mind. For every $n\ge 13$,
\begin{itemize}
	\item Parameters:
		\begin{align*}
			&\Theta_n:=2\pi\cdot2^{-n}\cdot\bb{n+\frac{1}6}&&t_n:=\sumit m {n+1}\infty2\Theta_m=4\pi\cdot 2^{-n}\bb{n+2+\frac16}\\
			&r_n:=\frac{128n}{t_n}+1 && a_n:=\exp\bb{-\bb{r_n+3}^{2^n}}.
		\end{align*}
	
	\item Main regions: 
	\begin{align*}
			P_n&:=\bset{z=re^{it} \colon \Re(z)<r_n \} \cup \{z=re^{it} \colon t_n+\Theta_n<\abs t<\pi},\\
		S_n& :=\bset{z=re^{it} \colon \Re(z)\ge r_n+\frac12\text{ and } \abs t\in\sbb{t_n,t_n+\Theta_n}},\text {and }\\
	F_n &:=\bset{z=re^{it} \colon \Re(z)>r_n +\frac12 \text{ and } \abs t<t_{n}}.
	\end{align*}
	\item Auxiliary regions:
	
		Denote by $\Sigma$ the half-strip
		$\Sigma:=\{z\colon \Re(z)>0 \text{ and } \vert \Im(z)\vert\leq 1/2 \}.$
	Then, define the sets
	\begin{align*}
		T_n &:= \{z \colon e^{-i\bb{t_n+\Theta_n}}z-r_n\in \Sigma\} \cup \{z \colon e^{i\bb{t_n+\Theta_n}}z-r_n\in \Sigma\}.\\
		B_n &:= \bset{z=re^{it} \colon \Re(z)\in [r_n, r_{n}+1] \text{ and } \abs{t} \leq t_n+\Theta_n}.
	\end{align*}
Note that $T_n$ is the union of two translated and rotated copies of $\Sigma$.

\item Auxiliary	lines:
	\begin{align*}
		\ell_n^+&:=\bset{z=re^{it} \colon \Re(z)\ge r_n + \frac12 \text{ and }  t= t_n+\Theta_n},\\
		\ell_n^-&:=\bset{z=re^{it} \colon \Re(z)\ge r_n + \frac12 \text{ and }  -t= t_n+\Theta_n},\\
		\gamma_n^+&:=\bset{z=re^{it} \colon \Re(z)\ge r_n + \frac12 \text{ and }  t= t_n},\\
		\gamma_n^-&:=\bset{z=re^{it} \colon \Re(z)\ge r_n + \frac12 \text{ and }  -t= t_n}, \text{ and }\\
		\beta_n &:=\bset{z=re^{it} \colon \Re(z)=r_n+\frac12 \text{ and } \abs{t} \leq t_n+\Theta_n}.
	\end{align*}
		
	Observe that  $\ell^{\pm}_n$ are essentially the central lines of the two components of $T_n$, and $\beta_n$ is the central line of $B_n$. Moreover, $(\ell^{\pm}_n)^{+\frac14} \subset T_n$ and $\beta_n^{+\frac14} \subset (B_n\cup T_n)$, while $\gamma_n^\pm$ are in $\partial S_n\cap \partial F_n$;  see Figure~\ref{fig:pasting_zones}.
\end{itemize}

The following observation describes several estimates that will be use throughout the proof of our theorem.

\begin{obs}\label{obs_N0}% There exists $N_0\in \N$ such that for all $n\geq N_0$, the following hold. 
For all $n\in\N$ large enough
\begin{enumerate}[label=(\alph*)]
\item $r_n\in (10\cdot 2^n, 11\cdot 2^n)$.
\item $r_{n}^{-2^{n-1}}+e^{-\frac1{12}}<1.$
\item $\frac{\exp\bb{\pi r_n}}{10^3}>e^{20n}$.
\item $3(1+r_{n+2})^2\sum_{j\geq n-1}2^{-4j}\leq 2^{-n}$.
\end{enumerate}
\end{obs}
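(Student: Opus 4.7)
My plan is to establish (a) first by a direct calculation from the definitions, and then to bootstrap (a) into the remaining three inequalities, each of which reduces to an elementary asymptotic comparison.

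For (a), I would substitute $t_n = 4\pi\cdot 2^{-n}(n+13/6)$ into $r_n = 128n/t_n + 1$ to obtain
\[
r_n = \frac{32n}{\pi(n+13/6)}\cdot 2^n + 1.
\]
As $n\to\infty$ the rational factor tends to $1$, so $r_n/2^n \to 32/\pi \approx 10.186$, which lies strictly inside the open interval $(10,11)$. Choosing any margin $\varepsilon < \min\{32/\pi - 10,\; 11 - 32/\pi\}$ yields both bounds of (a) for all sufficiently large $n$.

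Parts (b)--(d) follow quantitatively from (a). For (b), the lower bound $r_n \geq 10\cdot 2^n$ gives $r_n^{-2^{n-1}} \leq (10\cdot 2^n)^{-2^{n-1}}$, a super-exponentially decaying quantity, which is eventually dominated by the fixed positive number $1 - e^{-1/12}$. For (c), taking logarithms reduces the claim to $\pi r_n > 20n + 3\log 10$; since $\pi r_n \geq 10\pi\cdot 2^n$ by (a) and the exponential in $n$ dominates the linear term, this holds for large $n$. For (d), applying (a) at index $n+2$ gives $(1+r_{n+2})^2 \leq (1 + 44\cdot 2^n)^2 = O(4^n)$, while the geometric series $\sum_{j\geq n-1} 2^{-4j}$ is of order $O(16^{-n})$; multiplying, the left-hand side of (d) is of order $O(4^{-n})$, which is dominated by $2^{-n}$ for all large $n$.

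There is no real obstacle here; the only care required is bookkeeping, so that the individual thresholds produced by the four estimates can be absorbed into a single $n_0$ beyond which all four bounds hold simultaneously. One simply takes the maximum of these thresholds, which the numerics (most restrictively from (d), where one needs $2^n$ to dominate an absolute constant of order $10^5$) show can be taken on the order of $n_0 \approx 17$, comfortably consistent with the standing assumption $n \geq 13$ in the construction.
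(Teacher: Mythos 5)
Your proposal takes essentially the same route as the paper: first compute $r_n$ explicitly from the definition of $t_n$, obtain $r_n/2^n \to 32/\pi$, deduce (a), and then bootstrap (b)--(d) from (a) using elementary asymptotic comparisons. The estimates for (b), (c), and (d) are all correct (the paper writes out only (d) and dismisses (b)--(c) as "clearly" following from (a), whereas you helpfully spell out why they decay/dominate).

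One caveat on your closing bookkeeping remark: the binding constraint is \emph{not} (d) but (a) itself. Since $r_n/2^n = \frac{32/\pi}{1+13/(6n)} + 2^{-n}$ approaches $32/\pi \approx 10.186$ \emph{from below}, the lower bound $r_n > 10\cdot 2^n$ requires $13/(6n) < 32/(10\pi) - 1 \approx 0.0186$, i.e.\ roughly $n \geq 117$; at the standing threshold $n=13$ one has $r_n \approx 8.7\cdot 2^n$, so (a) fails there. (The paper, more conservatively, simply records $n\geq 200$.) Your estimate $n_0 \approx 17$ only accounts for (d). This does not affect the validity of the Observation, which is stated for "all $n$ large enough" and whose threshold is absorbed into the later choice of $N_1$, but the claim that $n_0 \approx 17$ is "comfortably consistent with $n\geq 13$" is a numerical slip worth correcting.
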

\begin{proof}
Recall that $r_n=\frac{128\cdot n}{t_n}+1=\frac{2^{n}\cdot 128\cdot n}{4\pi (n+13/6)}+1= 2^n\bb{\frac{128}{4\pi}\cdot \frac{1}{1+13/(6n)}+\frac{1}{2^n}}$. A calculation shows that (a) in particular holds for all $n\geq 200.$ Following (a), properties (b) and (c) clearly hold for all $n$ large enough, as does (d), since  $$3(1+r_{n+2})^2\sum_{j\geq n-1}2^{-4j}\leq 2r_{n+2}^2\cdot 2^{-4n+5}\leq \frac{2(11\cdot2^{n+2})^2}{2^{4n-5}}\leq \frac{1}{2^n}$$
for sufficiently large $n$.
\end{proof}

%%%%%%%%%%%%%%%%%%%%%%%%%%%%%%%%%%%%%%%%%%%%%%%%%%%%%%%%%%%%%%%%%%%%%%%%%%%%%%%%%%%%%%%%%%%%%%%%%%%%%%%%%%%%%%%%%%%%%%%%%%%%%%%%%%%%%%%%%%%%%%%%%%%%%%%%%%%%%%%%%%%%%%%%%%%%%%%%%%%%%%%%%%%%%%%%%%%%%%%%%%%%%%%%%%%%%%%%%%%%%%%%%%%%%%%%%%%%%%%%%%%%%%%%%%%%%%%%%%%%%%%%%%%%%%%%%%%%%%%%%%%%%%%%%%%%%%%%%%%%%%%%%%%%%%%%%%%%%%%%%%%%%%%%%%%%%%%%%%%%%%%%%%%%%%%%%%%%%%%%%%%%%%%%%%%%

\subsection{The pasting process and preliminaries}
To be able to define an entire function, that approximates different functions in disjoint regions, we will use H\"ormander's theorem on the solution to the $\bar\partial$-equation; see \cite[Theorem 4.2.1]{Hormander}.
\begin{thm}[H\"ormander]\label{thm:Hormander}
Let $u:\C\rightarrow\R$ be a subharmonic function. Then, for every locally square integrable function $g\colon \C\to\C$ there is a locally square integrable solution $\alpha \old{\colon \C\to\C}$ of the equation $\bar\partial \alpha=g$ such that
\begin{equation*}
\int_\C\abs {\alpha(z)}^2\frac{e^{-u(z)}}{\bb{1+\abs z^2}^{2}}dm(z)\le\frac12 \int_\C \abs {g(z)}^2e^{-u(z)}dm(z),
\end{equation*}
provided that the integral on the right hand side is finite.
\end{thm}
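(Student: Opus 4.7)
The plan is to carry out the classical Hörmander $L^2$-existence argument, working in a pair of weighted Hilbert spaces. Set $\varphi := u + 2\log(1+|z|^2)$ and regard $\bar\partial$ as a densely defined closed operator
$$
T\colon L^2\bigl(\C, e^{-\varphi}\,dm\bigr)\longrightarrow L^2\bigl(\C, e^{-u}\,dm\bigr).
$$
The conclusion of the theorem is exactly that $T\alpha = g$ admits a solution with $\|\alpha\|_{L^2(e^{-\varphi})}^2 \leq \tfrac12\|g\|_{L^2(e^{-u})}^2$. By the Hahn-Banach / Riesz representation scheme, this reduces to the $g$-independent a priori estimate
$$
\|v\|_{L^2(e^{-u})}^2 \leq \tfrac12\,\|T^* v\|_{L^2(e^{-\varphi})}^2
\quad \text{for every } v\in\mathrm{Dom}(T^*),
$$
since, combined with Cauchy-Schwarz, this bounds the functional $T^*v\mapsto \langle g,v\rangle_{L^2(e^{-u})}$ on $\mathrm{Range}(T^*)$ with the correct constant, after which one extends and represents it to obtain $\alpha$.

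The core step is an integration-by-parts identity. For $v\in C_c^\infty(\C)$, computing the formal adjoint of $\bar\partial$ in the weighted pairing gives
$$
T^* v \;=\; -\,(1+|z|^2)^2\,e^{-u}\bigl(\partial_z v - v\,\partial_z u\bigr),
$$
and expanding $\|T^*v\|_{L^2(e^{-\varphi})}^2$ and integrating by parts produces the Hörmander-Bochner identity, whose positive lower-order term is $\int |v|^2\,\partial_z\partial_{\bar z}\varphi\, e^{-u}\,dm$. Since $u$ is subharmonic in the sense of distributions, $\partial_z\partial_{\bar z}u\geq 0$, so
$$
\partial_z\partial_{\bar z}\varphi \;\geq\; \partial_z\partial_{\bar z}\bigl(2\log(1+|z|^2)\bigr) \;=\; \frac{2}{(1+|z|^2)^2}.
$$
Discarding the remaining non-negative $\|\partial_{\bar z} v\|^2$-type term in the identity yields precisely $\|v\|_{L^2(e^{-u})}^2 \leq \tfrac12\|T^*v\|_{L^2(e^{-\varphi})}^2$. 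A standard density argument, approximating the merely subharmonic weight $u$ by smooth subharmonic functions via mollification and letting the approximation parameter tend to zero, lifts the estimate from $C_c^\infty$ to all $v\in\mathrm{Dom}(T^*)$.

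The main obstacle I expect is the analytic justification of the integration by parts and its passage to the limit. One must verify that $C_c^\infty(\C)$ is a core for the adjoint $T^*$ in the weighted norms, control boundary behaviour at infinity as the support of $v$ grows, and regularize $u$ so that both the subharmonicity and the Laplacian lower bound on $\varphi$ are preserved in the limit. Tracking the constants sharply enough that the final prefactor is exactly $1/2$ and the left-hand weight is exactly $(1+|z|^2)^{-2}$ is what forces the specific choice $\varphi = u + 2\log(1+|z|^2)$ and is, in my view, the delicate part of the construction.
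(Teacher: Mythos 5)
The paper does not prove this theorem: it quotes it verbatim from H\"ormander's book \emph{Notions of convexity}, citing Theorem~4.2.1 there, and uses it as a black box. Your sketch is a faithful rendering of the standard argument in that reference — reduce to the dual a priori estimate $\|v\|_{L^2(e^{-u})}^2 \leq \tfrac12\|T^*v\|_{L^2(e^{-\varphi})}^2$ via Hahn–Banach/Riesz, prove it by the weighted integration-by-parts (Bochner–Kodaira) identity, use that in one complex variable the $(0,1)$-forms are automatically $\bar\partial$-closed so the second operator $S$ contributes nothing, lower-bound the curvature of $\varphi=u+2\log(1+|z|^2)$ by $\partial_z\partial_{\bar z}\bigl(2\log(1+|z|^2)\bigr)=2(1+|z|^2)^{-2}$, and finish by mollifying $u$ and a density argument for $\mathrm{Dom}(T^*)$. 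So the approach is correct and is the same route as the cited source. Two small points worth tightening: your displayed adjoint formula carries a spurious factor of $e^{-u}$; with $\varphi=u+2\log(1+|z|^2)$ one gets $T^*v=-e^{\varphi-u}\bigl(\partial_z v - v\,\partial_z u\bigr)=-(1+|z|^2)^2\bigl(\partial_z v - v\,\partial_z u\bigr)$. More substantively, because the source and target weights differ, one cannot apply the single-weight Bochner identity directly to $v$; the clean way is to substitute $\tilde v:=(1+|z|^2)^2 v$, for which $(1+|z|^2)^2\bigl(\partial_z v-v\,\partial_z u\bigr)=\partial_z\tilde v-\tilde v\,\partial_z\varphi$, so that $\|T^*v\|_{L^2(e^{-\varphi})}^2=\int|\partial_z\tilde v-\tilde v\,\partial_z\varphi|^2e^{-\varphi}\,dm$, and only then does the identity with the single weight $\varphi$ apply and produce $\int|\tilde v|^2\,\partial_z\partial_{\bar z}\varphi\,e^{-\varphi}\,dm\geq 2\int|v|^2e^{-u}\,dm=2\|v\|^2_{L^2(e^{-u})}$, giving exactly the constant $1/2$. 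Your sketch jumps over this change of unknown, which is precisely where the weight $e^{-u}$ (rather than $e^{-\varphi}$) reappears on the left-hand side; without it the curvature bound only controls $\|v\|_{L^2(e^{-\varphi})}$, which is too weak.
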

\begin{obs}\label{obs_Hormander} The equation $\bar\partial \alpha=g$ in the statement holds in distribution. The application of the theorem in our construction will provide us with a function $f\colon \C\to\C$ such that $\bar\partial f=0$ in distribution. This is enough to conclude that $f$ is entire: note that any locally square integrable function $h$ so that $\Delta h=0$ in distribution is harmonic, since it must be both subharmonic and superharmonic in distribution and satisfy a mean value property. Taking $h(z)=\Re(f(z))$ and noting that $f$ is defined in a simply-connected domain, our claim follows, as then $h$ has a conjugate, $\tilde{h}$, such that $h+\tilde{h}i$ is entire and differs from $f$ by at most a constant.
\end{obs}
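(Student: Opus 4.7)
The substance to justify is that any locally square integrable $f\colon\C\to\C$ with $\bar\partial f=0$ in distribution coincides almost everywhere with an entire function. My plan splits into three steps that realise the sketch in the observation.

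First, I would reduce to a Weyl-type regularity statement. The operator identity $4\,\partial\bar\partial=\Delta$ holds on distributions, so applying $\partial$ to $\bar\partial f=0$ gives $\Delta f=0$ in distribution, and separating real and imaginary parts yields $\Delta u=\Delta v=0$, where $u:=\Re(f)$ and $v:=\Im(f)$ lie in $L^2_{\mathrm{loc}}$. It therefore suffices to show that every locally integrable real-valued $h$ with $\Delta h=0$ in distribution admits a classically harmonic representative.

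Second, I would establish this regularity by mollification. If $\varphi_\varepsilon$ is a standard radial mollifier, then $h_\varepsilon:=h*\varphi_\varepsilon$ is smooth on $\C$ and satisfies $\Delta h_\varepsilon=0$ classically, so each $h_\varepsilon$ is harmonic and enjoys the spherical mean value property on every disc. Since $h_\varepsilon\to h$ in $L^1_{\mathrm{loc}}$ as $\varepsilon\to 0^+$, both sides of the mean value identity pass to the limit, and the Lebesgue-a.e.\ representative of $h$ inherits the mean value property on every disc; Koebe's theorem then upgrades this to classical harmonicity. Equivalently, following the hint in the observation, the upper- and lower-semicontinuous regularisations of $h$ produced by the two distributional inequalities $\Delta h\geq 0$ and $\Delta h\leq 0$ must coincide, again producing a harmonic representative. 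Thus $u$ and $v$ may be taken harmonic on $\C$ in the classical sense.

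Third, I would reassemble $f$. Simple connectedness of $\C$ lets me construct a harmonic conjugate $\tilde u$ of $u$: the closed $1$-form $-u_y\,dx+u_x\,dy$ is exact, and any primitive $\tilde u$ makes $F:=u+i\tilde u$ entire. The difference $g:=f-F$ then lies in $L^2_{\mathrm{loc}}$, satisfies $\bar\partial g=0$ in distribution, and has $\Re(g)=0$ almost everywhere. Writing $g=iw$ for real $w\in L^2_{\mathrm{loc}}$, the Cauchy--Riemann system reduces to $\partial_x w=\partial_y w=0$ as distributions, forcing $w$ to be almost everywhere equal to a constant on the connected set $\C$. Therefore $f$ agrees a.e.\ with the entire function $F+iw$, which is the promised holomorphic representative. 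The substantive analytic input is the Weyl-type regularity in step two; everything else is algebraic bookkeeping with distributions together with the topology of $\C$.
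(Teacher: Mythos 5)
Your proposal is correct and follows essentially the same route as the paper's sketch: Weyl-type regularity for $\Re(f)$ (via mollification / the mean value property), construction of a harmonic conjugate using simple connectedness of $\C$, and the observation that the remaining difference is a purely imaginary constant. You simply supply the standard details that the observation leaves implicit, so no gap to report.
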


Reading the statement, one can see we will need an underlying subharmonic function. This function guarantees the integral on the right indeed converges, and later we will also see it controls the growth of the resulting function. For now, let us take care of the first component- the subharmonic function: we will construct a sequence of subharmonic functions $\bset{u_n}$ each one to be used in the construction of the function $f_n$ from its predecessor.

The function $g_n$ will be supported on a small set contained in $T_n\cup B_n$, where $u_n$ grows rapidly, making the integral, and therefore the error, very small. Outside $T_n\cup B_n$, $u_n$ will be zero, providing a good approximation of $f_n$.
\begin{lem}\label{lem:sh}
For every $n\in\N$ large enough, there exists a subharmonic function $u_n\colon \C \to [0, \infty)$ satisfying that
\begin{enumerate}
\item Upper Bound: $u_n(z)\le\exp\bb{2\pi\abs{z}}$.
\item $Spt(u_n)\subset B_n\cup T_n$.
\item Lower Bounds:
$$
u_n(z)\ge	\begin{cases}
			e^{20n},& z\in \beta_n^{+\frac14},\\
			\frac{1}{10^3}\exp\bb{\pi\abs z},& z\in (\ell^{\pm}_n)^{+\frac14}.
		\end{cases}
$$
\end{enumerate}
\end{lem}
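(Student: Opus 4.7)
The plan is to decompose $u_n$ as a sum (or pointwise maximum) $u_n := u_n^+ + u_n^- + u_n^B$ of three non-negative subharmonic functions supported in the upper arm of $T_n$, the lower arm of $T_n$, and $B_n$, respectively; since sums and pointwise maxima of subharmonic functions are subharmonic, the resulting $u_n$ will be subharmonic and its support will be contained in $B_n\cup T_n$, and the three prescribed lower bounds can be inherited from the corresponding summand. By symmetry, $u_n^-$ is obtained from $u_n^+$ by conjugation, so I only construct $u_n^+$ and $u_n^B$ separately.

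For $u_n^+$, I would use the rigid motion $\Psi(z):=e^{-i(t_n+\Theta_n)}z-r_n$, which maps the upper arm of $T_n$ isometrically onto the half-strip $\Sigma$, carries $\ell_n^+$ to the positive real axis, and satisfies $|z|=|w+r_n|$. It then suffices to exhibit a non-negative subharmonic function $U\colon\C\to[0,\infty)$ supported in $\Sigma$, with $U(w)\ge \frac{1}{10^3}e^{\pi(r_n+\Re w)}$ on a $\tfrac14$-neighborhood of the positive real axis in $\Sigma$, and with global bound $U(w)\le \exp(2\pi|w+r_n|)$. A model for $U$: the harmonic function $h(w):=e^{\pi r_n}\Re(\sinh(\pi w))=e^{\pi r_n}\sinh(\pi\Re w)\cos(\pi\Im w)$ vanishes on the three natural boundary pieces of $\Sigma$ and, on the axis $\Im w=0$, equals $\frac12 e^{\pi(r_n+\Re w)}$ asymptotically; taking $U(w):=\max(h(w)-K,0)^+$ for a suitable constant $K$ gives a subharmonic function with the required axis lower bound. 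Finally, pulling back yields $u_n^+(z):=U(\Psi(z))$, and the analogous construction on the lower arm gives $u_n^-$.

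The main obstacle in this step is strip-localization: the candidate $h$ above, being a real part of an entire function of exponential type $\pi$, is periodic in $\Im w$ and hence $h^+$ has infinitely many positive lobes outside $\Sigma$. To suppress these extraneous lobes while preserving subharmonicity, I would replace $\sinh(\pi w)$ by an entire function $F(w)$ whose real part dominates $e^{\pi\Re w}$ on the axis of $\Sigma$ but becomes $\le K$ on $|\Im w|>1/2$. Such an $F$ can be built either by multiplying $e^{\pi w}$ by a Weierstrass-type product with zeros densely placed on $\Im w=\pm1$, or by convolving $\sinh(\pi w)$ with a suitably concentrated measure supported on $[-1/2,1/2]$ in the imaginary direction. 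The resulting $U(w):=\max(\Re F(w)-K,0)$ is subharmonic, supported in an arbitrarily thin neighborhood of $\Sigma$, and retains the exponential growth along the axis; the exact quantitative estimates are the technical heart of the argument and mirror constructions in \cite{Russakovskii1993, Glucksam2017, BGLS2017, Wandering2023}.

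For $u_n^B$, since $B_n$ is a fixed bounded rectangle and the required lower bound $e^{20n}$ on $\beta_n^{+1/4}$ is just a large \emph{constant} depending on $n$, it suffices to place a standard compactly supported subharmonic bump: one may take $u_n^B(z):=A_n\max(|Q(z)|^2-1,0)$ for a polynomial $Q$ whose zero set lies on $\partial B_n$ and with $A_n$ chosen so that $u_n^B\ge e^{20n}$ on $\beta_n^{+1/4}$. Assembling $u_n:=u_n^++u_n^-+u_n^B$, the support condition (2) and non-negativity are immediate, the lower bounds in (3) follow from the respective summands (using Observation~\ref{obs_N0}(c) to see that $e^{\pi r_n}/10^3\ge e^{20n}$ so the two lower bounds are mutually compatible on any overlap), and the upper bound (1) follows because the arm contribution is bounded by $|F(\Psi(z))|\le e^{\pi|\Psi(z)|+O(1)}\le e^{2\pi|z|}$ while $u_n^B$ is a constant dominated by $e^{2\pi|z|}$ on $B_n$.
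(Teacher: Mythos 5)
Your decomposition has a fatal flaw in the $B_n$ piece. There is \emph{no} nonconstant, nonnegative subharmonic function on $\C$ with compact support: if $u\geq 0$ is subharmonic, vanishes outside a compact set $K$, and is positive somewhere, then applying the maximum principle on a large disc containing $K$ gives $\sup_K u \leq \sup_{\partial D}u = 0$, a contradiction. Your explicit candidate $u_n^B(z)=A_n\max(|Q(z)|^2-1,0)$ exhibits the problem concretely but in reverse: for a polynomial $Q$, $|Q(z)|\to\infty$ as $|z|\to\infty$, so $\{|Q|\geq 1\}$ is the \emph{complement} of a bounded set, and the function has unbounded support. Replacing it with $\max(1-|Q|^2,0)$ would give compact support but is not subharmonic ($1-|Q|^2$ is superharmonic). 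So neither version of $u_n^B$ exists, and since the lower bound $e^{20n}$ is required on the middle of $\beta_n^{+1/4}$ near the real axis, which lies outside $T_n$, you cannot simply drop this piece. This is exactly where the paper's construction differs: it uses $b_n(z)=\cosh(\pi y)\cos\bigl(\pi(x-(r_n+\tfrac12))\bigr)$ for $x\in(r_n,r_n+1)$ and $0$ otherwise, which is subharmonic but supported in an \emph{unbounded} vertical strip (it grows like $\cosh(\pi y)$). Then $u_n$ is defined as $\max\{2e^{20n}b_n,\ e^{\pi r_n}v_n\}$ only on $B_n$ and as $e^{\pi r_n}v_n$ off $B_n$; the piecewise definition is legal because one verifies $2e^{20n}b_n\leq e^{\pi r_n}v_n$ on $\partial B_n$, so the two pieces match and the resulting $u_n$ is continuous and subharmonic. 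Your ``sum (or max) of three independently supported pieces'' scheme cannot reproduce this, since the $B_n$-piece must necessarily leak outside $B_n$ and then be overwritten using the arm contribution.

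Your second ``obstacle'' --- the infinitely many positive lobes of $\Re F$ --- is a red herring, and the machinery you propose to deal with it (Weierstrass products with zeros on $\Im w=\pm 1$, or convolutions in the imaginary direction) is both unnecessary and unlikely to work as stated: zeros of $F$ do not control the sign of $\Re F$, and convolving $\sinh(\pi w)$ against a compactly supported measure still leaves positive lobes around $\Im w\approx 2k$ for $|k|\geq 1$. The standard and much simpler fix, which the paper uses, is a \emph{case definition}: set $v(z)=\max\{\exp(\pi x)\cos(\pi y)-1,\,0\}$ for $x>0,\ |y|<\tfrac12$, and $v\equiv 0$ elsewhere. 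Since the harmonic function $\exp(\pi x)\cos(\pi y)-1$ is $\leq 0$ on a full neighbourhood of $\partial\Sigma$ from inside (it equals $-1$ on $|y|=\tfrac12$ and is $\leq 0$ on $x=0$), $v$ extends continuously by $0$ across $\partial\Sigma$, and the sub-mean-value inequality at boundary points is automatic from $v\geq 0$. Your $\sinh$-based $h$ shares all these boundary sign properties and would work with the identical case-definition trick, so the extraneous lobes never enter. The rest of your outline --- the rigid motions $\Psi$, pulling back the strip model, using symmetry for $u_n^-$, and invoking Observation~\ref{obs_N0}(c) for compatibility of the lower bounds --- matches the paper's approach. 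But the $u_n^B$ piece needs the max-gluing idea, not a compactly supported bump.
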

\begin{proof}
We begin by defining
$$
v(z):=v(x+iy)=\begin{cases}
\max\bset{\exp\bb{\pi x}\cos\bb{\pi y}-1,0},& x>0\text{ and }\abs y<\frac12,\\
0,& \text{ otherwise}.
\end{cases}
$$
The function $v$ is subharmonic and satisfies that
\begin{enumerate}[label=(\alph*)]
\item $v(z)=0$ whenever $\Re(z)\le0$ or $\abs{\Im(z)}\ge\frac12$.
\item $v(z)\le\exp\bb{2\pi\abs z}$.
\item For $z$ with $\abs{\Im(z)}\in\bb{0,\frac13}$, $v(z)\ge\frac12\exp\bb{\pi \Re(z)}-1$.
\end{enumerate}
For every $n$ large enough so that Observation \ref{obs_N0} holds,  
we let
$$
v_n(z)=v\bb{e^{-i\bb{t_n+\Theta_n}}z-r_n}+v\bb{e^{i\bb{t_n+\Theta_n}}z -r_n},
$$
i.e., $v_n$ is the sum of two rotations and translations of the function $v$ moving the two components of $T_n$ to the half strip $\Sigma$; see Figure~\ref{fig:pasting_zones}. In particular, $v_n$ is supported inside $T_n$. Using property (c) of the function $v$, if $z\in (\ell_n^{\pm})^{+\frac14},$ then we have that  $\abs{\Im\bb{e^{\pm i\bb{t_n+\Theta_n}}z-r_n}}<\frac14$, and therefore
$$
v_n(z)\ge \frac12\exp\bb{\pi \bb{\Re\bb{e^{\pm i\bb{t_n+\Theta_n}}z-r_n}}}-1.
$$
Note that we can  write any $z\in (\ell_n^{\pm})^{+\frac14}$ as 
\begin{equation}\label{eq_zinl}
z=r\cdot e^{\mp i\bb{t_n+\Theta_n}}+\zeta \text{   for some   } \zeta\in \frac14\D   \text{   and   } r>r_n,
\end{equation}
and so
\begin{align*}
\Re\bb{e^{\pm i\bb{t_n+\Theta_n}}z-r_n}&=\Re\bb{e^{\pm i\bb{t_n+\Theta_n}}\bb{r\cdot e^{\mp i\bb{t_n+\Theta_n}}+\zeta}-r_n}\\
&=r-r_n+\Re\bb{e^{\pm i\bb{t_n+\Theta_n}}\zeta}\ge r-r_n-\frac14.
\end{align*}
This implies that if $z\in(\ell_n^{\pm})^{+\frac14}$, then
$$
v_n(z)\ge \frac12\exp\bb{\pi\bb{r-r_n -\frac14}}-1.
$$

We next define another subharmonic function 
$$
b_n(z)=b_n(x+iy)=	\begin{cases}
		\cosh(\pi y)\cos\bb{\pi\bb{x-\bb{r_n+\frac12}}},& x\in\bb{r_n,r_n+1},\\
		0,& \text{otherwise},
		\end{cases}
$$
and use it to define
$$
u_n(z)=	\begin{cases}
			\max\bset{2e^{20n}\cdot b_n(z), \exp(\pi r_n)\cdot v_n\bb{z}},& z\in B_n,\\
			 \exp(\pi r_n)\cdot v_n\bb{z},& \text{otherwise}.
		\end{cases}
$$
Away from $\partial B_n$ the function $u_n$ is subharmonic as maximum of such functions. It is left to verify that in the transition area, $\partial B_n$, the function equals $\exp(\pi r_n)\cdot v_n\bb{z}$, and therefore $u_n$ is well-defined and subharmonic. On the sides of $\partial B_n$ the function $b_n$ is $0$ and so there is nothing to check. On the top and bottom of $\partial B_n$, 
\begin{align*}
2e^{20n}\cdot b_n(z)&\le 2e^{20n}\cdot \cosh\bb{\pi\bb{r_n\cdot \bb{t_n+\Theta_n}+1}}\le 2e^{20n+\pi}\cdot \exp\bb{2\pi\cdot t_n\bb{\frac{128n}{t_n}+1}}\le e^{900n}\\
&\overset{(\star)}\le \exp(\pi r_n)\cdot \bb{\frac12\exp\bb{\pi\bb{r-r_n-\frac14}}-1}\le  \exp(\pi r_n)\cdot v_n\bb{z}
\end{align*}
where $(\star)$ holds for every $r>r_n$ following Observation \ref{obs_N0}(a), since $r_n\ge 2^{n}$. We conclude that $u_n$ is well defined and subharmonic as local maximum of subharmonic functions.

We will now show that properties 1, 2, and 3 hold for the function $u_n$:
\begin{enumerate}%[label={Property \arabic*.}]
\item Following property (b) of the function $v$, if $z\in B_n$,
$$
2e^{20n}\cdot b_n(z)\le 2e^{20n}\cdot \exp\bb{\pi \Im(z)}\le e^{900n}\le e^{2\pi r_n}\le e^{2\pi\abs z},
$$
following Observation \ref{obs_N0}(a),
while for every $z\in \C$,
$$
\exp(\pi r_n)\cdot v_n\bb{z}\le \exp\bb{\pi r_n+2 \pi\bb{\abs z-r_n+1}}\le e^{2\pi\abs z},
$$
concluding the proof of the first property.
\item Following property (a) of the function $v$, $u_n$ is supported inside $B_n\cup T_n$ and so the second property holds.
\item Lastly, to see the third property note that if $z\in (\ell_n^{\pm})^{+\frac14}$, then using its representation as in (\ref{eq_zinl}), by property~(c) of the function $v$, 
\begin{align*}
u_n(z)\ge & \exp(\pi r_n)\cdot v_n\bb{z}\ge \exp(\pi r_n)\bb{\frac12\exp\bb{\pi\bb{r-r_n-\frac14}}-1} \\
	=& \exp(\pi r)\bb{\frac{\exp\bb{-\frac\pi4}}{2}-\exp(\pi\bb{r_n-r})}\ge \exp(\pi r)\bb{\frac{\exp\bb{-\frac\pi4}}{2}-\exp\bb{-\frac{\pi}{2}}}\\
	&\geq \exp\bb{\pi\bb{r+\frac14}}\exp\bb{-\frac\pi4}\bb{\frac{\exp\bb{-\frac\pi4}}{2}-\exp\bb{-\frac{\pi}{2}}}\geq \frac{\exp\bb{\pi \abs{z}}}{10^3},
\end{align*}
as $r>r_n+\frac12$ and $\abs{z}<r+\frac14$.

Next, let $z\in \beta_n^{+\frac14}$. If $z\notin B_n$, then $z\in(\ell_n^{\pm})^{+\frac14}$, and we use the bound above and Observation \ref{obs_N0}(c). Otherwise, $z\in \beta_n^{+\frac14}\cap  B_n$, and
$$
u_n(z)\ge 2e^{20n}\cdot\frac12=e^{20n}
$$
as needed to conclude that property 3 holds.\qedhere
\end{enumerate}
\end{proof}
%%%%%%%%%%%%%%%%%%%%%%%%%%%%%%%%%%%%%%%%%%%%%%%%%%%%%%%%%%%%%%%%%%%%%%%%%%%%%%%%%%%%%%%%%%%%%%%%%%%%%%%%%%%%%%%%%%%%%%%%%%%%
Given a sequence of uniformly separated domains $\Omega_1,\Omega_2,\cdots$ , we would like to find a smooth map whose zero set separates the domains from one another, assigns 1 to points in each one of the domains, and has a bounded gradient, which depends on the distance between the domains. The next proposition uses convolutions to construct such a map.
\begin{prop}\label{prop:chi}
There is a constant $C>0$ such that for every sequence of domains $\Omega_1,\Omega_2,\cdots$ satisfying that for every $j\ne k$,
$$
\dist(\Omega_j,\Omega_k):=\underset{z\in\Omega_k\atop w\in\Omega_j}\inf\; \abs{z-w}\ge 1,
$$
there exists a smooth mapping $\chi:\C\rightarrow[0,1]$ so that
\begin{enumerate}
\item For every $k$, $\chi|_{\Omega_k}\equiv 1$;
\item $\chi$ is supported on $ \bigcup_k\Omega_k^{+\frac14};$
\item $\nabla\chi$ is supported on $ \bigcup_k\bb{\Omega_k^{+\frac14}\setminus \Omega_k}$ and $\underset{z\in\C}\sup\;\abs{\nabla\chi(z)}\le C$.
\end{enumerate}
\end{prop}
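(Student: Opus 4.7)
My plan is to obtain $\chi$ via convolution of an indicator function with a standard mollifier. Fix once and for all a radially symmetric, smooth bump $\phi\ge 0$ on $\C$ supported in the disk of radius $\tfrac18$ with $\int_{\C}\phi\,dm=1$, and set $C:=\|\nabla\phi\|_{L^1(\C)}$, which is an absolute constant depending only on this choice. Put
$$U:=\bigcup_k \Omega_k^{+\frac18}$$
and define $\chi:=\mathbf{1}_U*\phi$. Because $\phi$ is smooth, so is $\chi$, and since $\phi\ge 0$ with total mass $1$, we have $0\le\chi\le 1$ pointwise on $\C$.

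Next, I would verify the three properties in turn. For property (1), fix any $z\in\Omega_k$: every $w$ with $|w|<\tfrac18$ satisfies $z-w\in\Omega_k^{+1/8}\subset U$, so $\chi(z)=\int_\C \phi(w)\,dm(w)=1$. For property (2), suppose $z\notin\bigcup_k\Omega_k^{+1/4}$; if the ball $B(z,\tfrac18)$ intersected $\Omega_k^{+1/8}$ for some $k$, one would obtain $\dist(z,\Omega_k)<\tfrac14$, a contradiction, so $B(z,\tfrac18)\cap U=\emptyset$ and $\chi(z)=0$. For property (3), differentiating under the integral sign gives $\nabla\chi=\mathbf{1}_U*\nabla\phi$, hence
$$|\nabla\chi(z)|\le \int_\C |\nabla\phi(w)|\,dm(w)=C$$
for every $z\in\C$; moreover $\chi\equiv 1$ on each open set $\Omega_k$ by (1) and $\chi\equiv 0$ off $\bigcup_k\Omega_k^{+1/4}$ by (2), so $\nabla\chi$ vanishes on $\Omega_k$ and off $\Omega_k^{+1/4}$, placing its support in $\bigcup_k(\Omega_k^{+1/4}\setminus\Omega_k)$.

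The role of the separation hypothesis $\dist(\Omega_j,\Omega_k)\ge 1$ is to guarantee that the fattened sets $\Omega_k^{+1/8}$ making up $U$ are pairwise disjoint, in fact $\dist(\Omega_j^{+1/8},\Omega_k^{+1/8})\ge \tfrac34$; without this, the convolution could see more than one $\Omega_k$ at a time and values of $\chi$ could in principle exceed $1$, or the simple support description in (2) could fail. I do not anticipate any genuine obstacle here: once the scales $\tfrac18$ (bump radius) and $\tfrac14$ (the prescribed padding) are chosen compatibly with a separation of $1$, the argument is a routine mollification, and the absolute constant $C$ is determined solely by the choice of $\phi$.
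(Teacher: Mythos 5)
Your proof is correct and is essentially the same mollification argument the paper sketches: convolve a suitable cutoff with a fixed smooth bump of small radius, so that the constant $C$ is just $\|\nabla\phi\|_{L^1}$. The only difference is that you mollify the indicator function of $\bigcup_k\Omega_k^{+1/8}$ directly, whereas the paper first replaces it with a continuous Lipschitz function built from the distances to the sets $\Omega_k^{+1/5}$ and then convolves; since only a first-order gradient bound is needed, your shortcut is harmless (convolution of an $L^\infty$ function with a smooth compactly supported kernel is already smooth) and your choice of radii is internally consistent, so nothing is lost.
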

\begin{proof}[Sketch of proof]
We only include an idea of the proof here as this is an exercise in convolutions. Recall the bump function
$$
b(z):=	\begin{cases}
			A\exp\bb{\frac{-1}{1-25\abs z^2}},& \abs z<\frac15,\\
			0,& \abs z\ge \frac15,
			\end{cases}
$$
where the constant $A$ is chosen such that $A\int b(z)dm(z)=1$. We define
$$
t(z):= \max\bset{0,4\bb{1-\min_k \dist\bb{z,\Omega_k^{+\frac15}}}-3}.
$$
The function $t\bb\cdot$ is continuous, well defined, has image $[0,1]$, and if $z\in \Omega_k$, then $t|_{B\bb{z,\frac15}}\equiv 1$, while if for every $k$, $\dist\bb{z,\Omega_k}>\frac14$ then $t(z)=0$. We then define
$$
\chi(z)=(b\ast t)(z).
$$
We leave it an exercise to verify that properties 1-3 hold.
\end{proof}
\begin{obs}
If in the previous proposition the sequence of domains $\{\Omega_i\}$ satisfies that $\dist(\Omega_j,\Omega_k)\geq 1/n$ for all $j\neq k$ and some $n\geq 1$, then a calculation shows that the constant in statement becomes $C\cdot n$ and, for each $k$, $\Omega_k^{+\frac14}$ becomes~$\Omega_k^{+\frac1{4n}}$.
\end{obs}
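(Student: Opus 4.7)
The plan is to reduce the observation to Proposition \ref{prop:chi} via a simple dilation. First I would rescale the domains by setting $\tilde\Omega_k := n\Omega_k$ for each $k$. The hypothesis $\dist(\Omega_j,\Omega_k)\geq 1/n$ immediately translates to $\dist(\tilde\Omega_j,\tilde\Omega_k)\geq 1$ for all $j\neq k$, so Proposition \ref{prop:chi} applies and produces a smooth $\tilde\chi\colon \C\to[0,1]$ with $\tilde\chi|_{\tilde\Omega_k}\equiv 1$, supported inside $\bigcup_k \tilde\Omega_k^{+1/4}$, and satisfying $\sup_{w\in\C}\abs{\nabla\tilde\chi(w)}\leq C$.

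Next I would pull $\tilde\chi$ back by the dilation $z\mapsto nz$ and define $\chi(z):=\tilde\chi(nz)$. The map $\chi$ is smooth and valued in $[0,1]$, and $\chi|_{\Omega_k}\equiv 1$ since $z\in\Omega_k$ forces $nz\in\tilde\Omega_k$. Because the dilation multiplies Euclidean distances by a factor of $n$, one has
\[
\dist(nz,\,n\Omega_k)=n\cdot \dist(z,\Omega_k),
\]
so $nz\in\tilde\Omega_k^{+1/4}$ if and only if $z\in\Omega_k^{+1/(4n)}$. This yields properties 2 and 3 with the fattenings shrunk to $1/(4n)$, as claimed.

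Finally, the chain rule gives $\nabla\chi(z)=n\cdot(\nabla\tilde\chi)(nz)$, so $\abs{\nabla\chi(z)}\leq Cn$ for every $z\in\C$, producing the new constant advertised in the observation. There is no genuine obstacle here: after the single rescaling, the setup reduces precisely to the one already handled by Proposition \ref{prop:chi}, and the entire argument amounts to tracking how the fattening operation, the support, and the gradient transform under $z\mapsto nz$.
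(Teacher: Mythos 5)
Your rescaling argument is correct and is essentially the calculation the paper alludes to but does not spell out: conjugating by the dilation $z\mapsto nz$ reduces the $1/n$-separated case to the unit-separated case of Proposition \ref{prop:chi}, and the chain rule cleanly produces the factor $n$ in the gradient bound while the distance formula $\dist(nz,n\Omega_k)=n\,\dist(z,\Omega_k)$ shrinks the fattening from $\tfrac14$ to $\tfrac1{4n}$. One could instead rerun the convolution proof with a bump supported on a disc of radius $\tfrac1{5n}$ and a correspondingly modified cutoff $t$, which yields the same scaling, but your pullback approach is the cleaner of the two.
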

%%%%%%%%%%%%%%%%%%%%%%%%%%%%%%%%%%%%%%%%%%%%%%%%%%%%%%%%%%%%%%%%%%%%%%%%%%%%%%%%%%%%%%%%%%%%%%%%%%%%%%%%%%%%%%%%%%%%%%%%%%%%%%%%%%%%%%%%%%%%%%%%%%%%%%%%%%%%%%%%%%%%%%%%%%%%%%%%%%%%%%%%%%%%%%%%%%%%%%%%%%%%%%%%%%%%%%%%%%%%%%%%%%%%%%%%%%%%%%%%%%%%%%%%%%%%%%%%%%%%%%%%%%%%%%%%%%%%%%%%%%%%%%%%%%%%%%%%%%%%%%%%%%%%%%%%%%%%%%%%%%%%%%%%%%%%%%%%%%%%%%%%%%%%%%%%%%%%%%%%%%%%%%%%%%%%
\section{A sequence of entire functions}
In this section we will construct the sequence of entire functions, $\bset{f_n}$. These functions satisfy that $f_n$ agrees (up to an error term) with $f_{n-1}$ on the past, $P_n$, (excluding some pasting area), and on the future, $F_n$, while on the sectors, $S_n$, the function $f_n$ is (up to an error term) $a_n\cdot \exp\bb{z^{2^n}}$.

Recall that for a set $\Omega\subset\C$ we defined $\Omega\inv:=\bset{z \colon \dist(z \colon \C\setminus\Omega)>1}$.
\begin{lem}\label{lem:sequence_liminf}
There exist $N_1$ large enough, so that Observation \ref{obs_N0} holds, and sequences of functions $\bset{h_n}_{n\geq N_1}$, $h_n\colon \C\to \C$, and $\bset{f_n}_{n\geq N_1}$, $f_n\colon \C\to \C$, so that $$f_{N_1}(z):=h_{N_1}(z):=a_{N_1}e_{2^{N_1}}(z)$$
 and for all $n\geq N_1$:
\begin{enumerate}
\item For $n>N_1$,
$$
h_n(z)=	\begin{cases}
		f_{n-1}(z),& z\in P_n\inv,\\
		a_ne_{2^n}(z),& z\in S_n\inv,\\
		0,& z\in F_n\inv.
		\end{cases}
$$
\item $\underset{z\in P_n\inv\cup S_n\inv\cup F_n^{-1}}\sup\frac{\abs{f_n(z)-h_n(z)}}{\bb{1+\abs z}^2}<2^{-4n}$.
\item For every $z\in\C\setminus\bb{P_n\inv\cup S_n\inv\cup F_n^{-1}}$ we have $ \abs{f_n(z)}\le 3 \bb{1+\abs z}^2\exp\bb{\exp\bb{7 \abs z}}$.
\item $f_n$ is entire.
\end{enumerate}
\end{lem}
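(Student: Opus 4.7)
The plan is to proceed by induction on $n$. For the base case, choose $N_1$ large enough so that Observation~\ref{obs_N0} and Lemma~\ref{lem:sh} both apply (and a few quantitative thresholds to be calibrated below hold), and set $f_{N_1}=h_{N_1}=a_{N_1}e_{2^{N_1}}$; properties 1--4 are then immediate. For the inductive step, assume that $f_{n-1}$ is entire and satisfies the conclusions for $n-1$, and construct $f_n$ by H\"ormander-correcting a smooth pasting of the three prescribed functions.

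First I would apply Proposition~\ref{prop:chi} to the three mutually $1$-separated sets $P_n^{-1},S_n^{-1},F_n^{-1}$ (separately) to obtain smooth cutoffs $\chi_P,\chi_S,\chi_F\colon\C\to[0,1]$, each equal to $1$ on the corresponding shrunken region, supported inside a $\tfrac14$-thickening contained in the respective $P_n$, $S_n$, $F_n$, and with gradients bounded by a universal constant $C$ and supported in the pasting regions $B_n\cup T_n$. Setting
$$
H_n(z):=\chi_P(z)f_{n-1}(z)+\chi_S(z)a_ne_{2^n}(z),
$$
we obtain a smooth function with $H_n\equiv h_n$ on $P_n^{-1}\cup S_n^{-1}\cup F_n^{-1}$ and $\bar\partial H_n=(\bar\partial\chi_P)f_{n-1}+(\bar\partial\chi_S)a_ne_{2^n}$ supported in $B_n\cup T_n$. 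I would then apply Theorem~\ref{thm:Hormander} with subharmonic weight $u=u_n$ from Lemma~\ref{lem:sh} and $g=\bar\partial H_n$ to produce a distributional solution $\alpha_n$ of $\bar\partial\alpha_n=\bar\partial H_n$ satisfying the weighted $L^2$ estimate. The hypothesis is met because $u_n$ is enormous exactly where $g$ lives: $u_n\ge e^{20n}$ on $\beta_n^{+1/4}\subset B_n$ and $u_n\ge\exp(\pi|z|)/10^3$ on $(\ell_n^{\pm})^{+1/4}\subset T_n$, while the pointwise size of $g$ is controlled on $B_n$ by $C\cdot(|f_{n-1}|+|a_ne_{2^n}|)$ (which, by the inductive hypothesis and by the choice $a_n=\exp(-(r_n+3)^{2^n})$ which makes $|a_ne_{2^n}(z)|$ tiny for $|z|\le r_n+1$, is at worst $\exp\exp(7|z|)$) and on $T_n$ by a similar expression. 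A careful calculation shows that the right-hand side of H\"ormander's inequality is bounded, say, by $2^{-10n}$.

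Define $f_n:=H_n-\alpha_n$. Then $\bar\partial f_n\equiv 0$ distributionally, so $f_n$ is entire by Observation~\ref{obs_Hormander}, giving property 4. Property 1 is built into the definition of $h_n$. For property 2 on any $z\in P_n^{-1}\cup S_n^{-1}\cup F_n^{-1}$, note that $H_n=h_n$ there, so $f_n-h_n=-\alpha_n$; moreover $\bar\partial\alpha_n=\bar\partial H_n$ vanishes outside $B_n\cup T_n$, so $\alpha_n$ is holomorphic on a neighborhood of $\Omega^{-1}$, and $u_n\equiv 0$ off $B_n\cup T_n$. The sub-mean-value inequality applied on a disk of radius $\tfrac14$ (contained in this holomorphicity neighborhood where the weight is simply $(1+|w|^2)^{-2}$) converts the weighted $L^2$ bound into the pointwise bound $|\alpha_n(z)|\le 2^{-4n}(1+|z|)^2$. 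Property 3 for $z\notin P_n^{-1}\cup S_n^{-1}\cup F_n^{-1}$ is verified by writing $|f_n|\le|H_n|+|\alpha_n|$, bounding $|H_n|$ directly from $|f_{n-1}|$ (via the inductive property 3) and $|a_ne_{2^n}|$ (tiny), and bounding $|\alpha_n|$ via a sub-mean-value argument on small disks that now incorporates the upper bound $u_n(z)\le\exp(2\pi|z|)$.

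The main obstacle is the quantitative bookkeeping: the margin by which $u_n$ dominates $|\bar\partial H_n|^2$ must be large enough to give the $2^{-4n}$ bound in property 2 \emph{and} small enough (in the pasting region) that the same $\alpha_n$ still satisfies the $3(1+|z|)^2\exp\exp(7|z|)$ bound of property 3. This is precisely why the parameters $r_n$, $t_n$, $\Theta_n$, and $a_n$ are calibrated as in Observation~\ref{obs_N0}: the hierarchy $a_n=\exp(-(r_n+3)^{2^n})$ makes the sector contribution negligible on $B_n\cup T_n$, while the explicit lower bounds on $u_n$ in Lemma~\ref{lem:sh} are tight enough to give the needed margin when combined with the inductive growth bound on $f_{n-1}$.
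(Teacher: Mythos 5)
Your overall architecture is the right one — H\"ormander-correct a smooth pasting, using the subharmonic weight $u_n$ from Lemma~\ref{lem:sh} — and this is indeed the route the paper takes. However, there is a load-bearing error in how you apply Proposition~\ref{prop:chi}, and it breaks the argument at the H\"ormander step.

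You propose to apply Proposition~\ref{prop:chi} separately to $P_n^{-1}$, $S_n^{-1}$, $F_n^{-1}$, obtaining $\chi_P,\chi_S,\chi_F$ ``supported inside a $\tfrac14$-thickening contained in the respective $P_n$, $S_n$, $F_n$ \dots with gradients \dots supported in the pasting regions $B_n\cup T_n$.'' These two claims contradict each other. Applied to the single domain $P_n^{-1}$, Proposition~\ref{prop:chi} yields $\nabla\chi_P$ supported in $(P_n^{-1})^{+\frac14}\setminus P_n^{-1}$: a $\tfrac14$-thick shell lying \emph{inside} $P_n$ at distance between $\tfrac34$ and $1$ from $\partial P_n$. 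But $B_n=\{\Re(z)\in[r_n,r_n+1],\,|t|\le t_n+\Theta_n\}$ is entirely disjoint from $P_n$, and $T_n$ reaches only $\tfrac12$ into $P_n$ (its components are width-$1$ strips centered on the rays $\ell_n^\pm\subset\partial P_n$). Hence the shell carrying $\nabla\chi_P$ is disjoint from $B_n\cup T_n$, and the same goes for $\nabla\chi_S$ and $\nabla\chi_F$. Consequently $\bar\partial H_n$ is supported where $u_n\equiv 0$ (since $\mathrm{Spt}\,u_n\subset B_n\cup T_n$ by Lemma~\ref{lem:sh}), so the weight contributes nothing to $\int|\bar\partial H_n|^2e^{-u_n}dm$. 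Worse, the hypothesis of Theorem~\ref{thm:Hormander} then fails outright: on the unbounded portion of the shell running alongside the rays $|t|=t_n+\Theta_n$ (which lies in $F_{n-1}^{-1}$, so $|f_{n-1}(z)|\lesssim 2^{-4(n-1)}(1+|z|)^2$ but no better), one has $|\bar\partial H_n|^2\approx(1+|z|)^4$, and integrating a polynomial over an infinite strip of fixed width gives $+\infty$.

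The paper's fix is precisely the step you skipped: rather than feeding $P_n^{-1},S_n^{-1},F_n^{-1}$ into Proposition~\ref{prop:chi}, it feeds in the four (much larger) connected components of $\C\setminus L^{+\frac14}$, where $L=\beta_n\cup\ell_n^\pm\cup\gamma_n^\pm$. This forces the resulting $\chi_n$ to drop from $1$ to $0$ precisely in a neighbourhood $L^{+\frac14}\setminus L^{+\frac18}$ of the central spine $L$ of the glueing zone, so $\bar\partial h_n$ lands in $\beta_n^{+\frac14}\cup(\ell_n^\pm)^{+\frac14}\cup(\gamma_n^\pm)^{+\frac14}$. The first two of these sets are where Lemma~\ref{lem:sh} gives large lower bounds for $u_n$ (this handles the pieces where the glued data, $f_{n-1}$ or $e_{2^n}$ near its maximum, can be enormous), and on $(\gamma_n^\pm)^{+\frac14}$ the weight is not needed because $|a_n e_{2^n}|$ is already superexponentially small there (the argument around \eqref{eq_arg}). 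That coordination between ``where the gradient lives'' and ``where the weight is large'' is the actual content of the lemma; with the cutoffs you propose, it is lost.
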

The rest of this section will be dedicated to proving this lemma, and $N_1$ will be chosen at the end.

For every $n\geq N_1$, let $u_n$ be the subharmonic function defined in Lemma \ref{lem:sh}. Define the set $L:=\beta_n \cup \ell^{\pm}_n\cup \gamma^\pm_n$ which is connected (see Figure \ref{fig:pasting_zones}). The set $\C\setminus L^{+\frac14}$ consists of four connected components, that are at distance at least $1/2$ from one another. Moreover, the sets $P^{-\frac14}_n,\;S_n^{-\frac14}$ and $F_n^{-\frac14}$ lie in different connected components of $\C\setminus L^{+\frac14}$. Thus, we can apply Proposition \ref{prop:chi} to these four components to get a smooth function $\chi_n\colon \C\to [0,1]$ which, in particular, equals $1$ on each of the sets $P_n^{-\frac14},\;S_n^{-\frac14}$ and $F_n^{-\frac14}$, $spt\nabla\chi_n\subset L^{+\frac14}\setminus L^{+\frac18}$, $\chi_n\equiv 0$ in $L^{+\frac18}$ and $\underset{z\in\C}\sup\;\abs{\nabla\chi_n(z)}\le 2C'$, for some $C'>0$ independent of $n$. We denote $C:=2C'$.

Let $\Omega_1^n$ denote the connected component of $\C\setminus\bset{\chi_n\equiv 0}$ containing $P_n^{-\frac14}$, $\Omega_2^n$ be the union of the two connected components containing $S_n^{-\frac14}$, and $\Omega_3^n$ be the connected component containing $F_n^{-\frac14}$. 

We shall construct the functions $f_n$ recursively; Note that the properties in the statement of the lemma hold trivially for $f_{N_1}$ and $h_{N_1}$. Suppose now that $f_{n-1}$ has been defined and properties 1-4 hold for $f_{n-1}$ and $h_{n-1}$.

We define the model map
\begin{align*}
h_n(z)&:=	\chi_n(z)\cdot \bb{f_{n-1}(z)\cdot\indic{\Omega_1^n}(z)+a_n\cdot e_{2^n}(z)\cdot\indic{\Omega_2^n}(z)}
=		\begin{cases}
			\chi_n(z)\cdot f_{n-1}(z),&  z\in\Omega_1^n,\\
			\chi_n(z)\cdot a_n\cdot e_{2^n}(z),& z\in\Omega_2^n,\\
			0,& \text{otherwise}.
		\end{cases}
\end{align*}
The map $h_n(z)$ is well defined and smooth as a sum and product of smooth functions. Note that if $z\in P_n\inv\cup S_n\inv\cup F_n^{-1}$, then $\chi_n(z)=1$ and therefore property 1 of the sequences holds.

We would like to use H\"ormander's Theorem with the functions 
$$
g(z)=g_n(z):=\bar\partial h_n(z)=\begin{cases}
			\bar\partial\chi_n(z)\cdot f_{n-1}(z),&  z\in\Omega_1^n,\\
			\bar\partial\chi_n(z)\cdot a_n\cdot e_{2^n}(z),& z\in\Omega_2^n,\\
			0,& \text{otherwise},
		\end{cases}
$$
and the subharmonic function $u(z)=u_n(z)$ from Lemma \ref{lem:sh}. 
\subsection{Bounding the integral $\int_\C\abs {g_n(z)}^2e^{-u_n(z)}dm(z)$:}
In order to use H\"ormander's theorem, we first need to bound from above the integral
$$
\int_\C \abs {g_n(z)}^2e^{-u_n(z)}dm(z).
$$
Indeed, following property 4 of the function $\chi_n$, there exists some uniform constant $C>0$ so that
\begin{align*}
\int_\C \abs {g_n(z)}^2e^{-u_n(z)}dm(z)&=\int_{spt{\nabla\chi_n}}\abs {g_n(z)}^2e^{-u_n(z)}dm(z)\le C^2\bb{I_1+I_2}\\
\text{ for }\;\;\;\;\;\;\;\;\;\;I_1&:=\int_{spt{\nabla\chi_n}\cap\Omega_1^n}\abs {f_{n-1}(z)}^2e^{-u_n(z)}dm(z)\\
I_2&:=\int_{spt{\nabla\chi_n}\cap\Omega_2^n}a_n^2\abs {e_{2^n}(z)}^2e^{-u_n(z)}dm(z).
\end{align*}
Let us bound each integral separately. Note that $spt\nabla\chi_n\cap\Omega_1^n\subset \beta_n^{+\frac14}\cup (\ell^{\pm}_n)^{+\frac14}\subset F_{n-1}$ and, since we assumed that $f_{n-1}$ satisfies property 2,
\begin{align*}
I_1&=\int_{spt{\nabla\chi_n}\cap\Omega_1^n}\abs {f_{n-1}(z)}^2e^{-u_n(z)}dm(z)\le \int_{spt{\nabla\chi_n}\cap\Omega_1^n}2^{-8(n-1)}\bb{1+\abs z}^4e^{-u_n(z)}dm(z)\\
& \leq 2^{-8(n-1)}\bb{\int_{(\ell^{\pm}_n)^{+\frac14}\cap\Omega_1^n}\bb{1+\abs z}^4e^{-u_n(z)}dm(z)+\int_{\beta_n^{+\frac14}\cap\Omega_1^n}\bb{1+\abs z}^4e^{-u_n(z)}dm(z)}\\
&\le2^{-8(n-1)}\bb{2\integrate{r_n+\frac14}{\infty}{\bb{1+ r}^4\exp\bb{-\frac{e^{\pi r}}{10^3}}}r+2\exp\bb{-e^{20n}}\bb{\bb{r_n+1}^2+\bb{r_n\cdot \bb{t_n+\Theta_n}+1}^2}^2m(\beta_n^{+\frac14})}\\
&\le2^{-8(n-1)}\bb{4\integrate{2^{n+1}}{\infty}{r^4\exp\bb{-r^5}}r+1100n\exp\bb{-e^{20n}}\bb{(550n)^2+2^{2n+6}}^2}\le\frac{2^{-10n}}{2C^2}
\end{align*}
since $r^5<\frac{\exp(\pi r)}{10^3}$ for all $r\in\R$ large enough, and we may assume $n\geq N_1$ with $N_1$ large enough. 

To bound $I_2$, we note that, since $spt{\nabla\chi_n}\cap\Omega_2^n\subset \beta_n^{+\frac14}\cup (\ell^{\pm}_n)^{+\frac14}\cup (\gamma^\pm_n)^{+\frac14}$, then
\begin{align*}
I_2&=\int_{spt{\nabla\chi_n}\cap\Omega_2^n}a_n^2\abs {e_{2^n}(z)}^2e^{-u_n(z)}dm(z)\\
&\le a_n^2\bb{\int_{spt{\nabla\chi_n}\cap \beta_n^{+\frac14}}+\int_{spt{\nabla\chi_n}\cap (\ell^{\pm}_n)^{+\frac14}}+\int_{spt{\nabla\chi_n}\cap (\gamma^\pm_n)^{+\frac14}}}\abs {e_{2^n}(z)}^2e^{-u_n(z)}dm(z)\\
&\le a_n^2\bb{m\bb{\beta_n^{+\frac14}}\cdot \exp\bb{-e^{20n}}\underset{z\in \beta_n^{+\frac14}}\max \abs{e_{2^n}(z)}^2+\int_{spt{\nabla\chi_n}\cap (\ell^{\pm}_n)^{+\frac14}}\abs {e_{2^n}(z)}^2dm(z)+\int_{spt{\nabla\chi_n}\cap (\gamma^\pm_n)^{+\frac14}}\abs {e_{2^n}(z)}^2dm(z)},
\end{align*}
as $u_n(z)\geq 0$ for all $z$. To bound the last two integrals, let $z\in (\gamma^\pm_n)^{+\frac14}\cup (\ell^{\pm}_n)^{+\frac14}$. Then there exists $\zeta\in \frac14\D$ so that $z=re^{i\xi_n}+\zeta$, where
$$
\xi_n=	\begin{cases}
			t_n,& z\in (\gamma^+_n)^{+\frac14}, \Im(z)>0,\\
			-t_n,& z\in (\gamma^-_n)^{+\frac14}, \Im(z)<0,\\
			t_n+\Theta_n,& z\in (\ell_n^{+})^{+\frac14},\\
			-\bb{t_n+\Theta_n},& z \in (\ell_n^{-})^{+\frac14}.
		\end{cases}
$$
Since $\Re(z)>0$, $\Arg(z)=\arctan\bb{\frac{\Im(z)}{\Re(z)}}$ and so
\begin{equation}\label{eq_arg}
\Arg(z)=\Arg\bb{re^{i \xi_n}+\zeta}=\xi_n+\Arg\bb{1+\frac{e^{-i\xi_n}\zeta}r}\in \bb{\xi_n-\frac1{r}, \xi_n +\frac1{r}},
\end{equation}
as
$$
\abs{\Arg\bb{1+\frac{e^{-i\xi_n}\zeta}r}}=\abs{\arctan\bb{\frac{\Im\bb{1+\frac{e^{-i\xi_n}\zeta}r}}{\Re\bb{1+\frac{e^{-i\xi_n}\zeta}r}}}}\le \arctan\bb{\frac{\frac{1}{4r}}{1-\frac{1}{4r}}}\le \frac1{r}.
$$
We conclude that if $z\in (\ell^{\pm}_n)^{+\frac14} \cup (\gamma^\pm_n)^{+\frac14}$, then since $t_n=4\pi\cdot 2^{-n}\bb{n+2+\frac16}$ and $r_n=\frac{128n}{t_n}+1$ we get that $\abs{2^n\arg(z)-2^n\xi_n}<\frac{5\pi}{128},$ implying that $\cos(2^n\arg(z))<-\frac14$. Therefore,
$$\abs{e_{2^n}(z)}\le\exp\bb{-\frac{r^{2^n}}4}.$$
Using this estimate, we conclude that
\begin{align*}
	I_2& \le a_n^2\bb{m\bb{\beta_n^{+\frac14}}\cdot \exp\bb{-e^{20n}}\exp\bb{2\bb{\bb{r_n+1}^2+\bb{r_n\cdot \bb{t_n+2\Theta_n}}^2}^{2^{n-1}}}+\frac12\integrate {r_n+\frac14}\infty{ \exp\bb{-\frac{r^{2^n}}4}}r}\\
	&\le \frac{2^{-10n}}{2C^2}
\end{align*}
for all $n\geq N_1$ if $N_1$ is large enough, since we chose $a_n=\exp\bb{-\bb{r_n+3}^{2^n}}$. Overall,  we get that
$$
\int_\C\abs {g_n(z)}^2e^{-u_n(z)}dm(z)\le C^2\bb{I_1+I_2}\le 2^{-10n}
$$
for all $n\geq N_1$ as long as $N_1$ is large enough.
\subsection{Defining the function $f_n$:}
Let $\alpha_n$ be the solution given by H\"ormander's Theorem, Theorem \ref{thm:Hormander}, with $g_n(z)$ and $u_n(z)$ described above.  We define the function
$$
f_n(z):=h_n(z)-\alpha_n(z).
$$
First, note that for every $z\in\C$,
$$
\bar\partial f_n(z)=\bar\partial\bb{h_n(z)-\alpha_n(z)}=\bar\partial h_n(z)-\bar\partial\alpha_n(z)=0
$$
in distribution. In particular, $f$ is entire; see Observation \ref{obs_Hormander}. To conclude the proof of Lemma \ref{lem:sequence_liminf}, we will show properties 2 and 3 hold for $f_n$.
\subsubsection{Property 2 holds:}
Recall that  $\chi_n(w)=1$ whenever $w\in P^{-\frac14}_n\cup S_n^{-\frac14}\cup F_n^{-\frac14}$, and that
$u_n$ is supported in $B_n\cup T_n$. Since $P_n^{-\frac12}\cup S_n^{-\frac12}\cup F_n^{-\frac12} \subset \C\setminus(B_n\cup T_n)$, we have that if $z\in P_n\inv\cup S_n\inv\cup F_n^{-1}$ then $B\bb{z,\frac13}\subset\bset{\chi_n\equiv 1}\cap\bset{u_n\equiv 0}$.
 Both $h_n$ and $f_n$ are holomorphic in $B\bb{z,\frac13}$, and therefore
\begin{align*}
\abs{f_n(z)-h_n(z)}^2&=\frac{81}{\pi^2}\abs{\integrate{B\bb{z,\frac13}}{}{\bb{f_n(w)-h_n(w)}}m(w)}^2=\frac{81}{\pi^2}\abs{\integrate{B\bb{z,\frac13}}{}{\alpha_n(w)}m(w)}^2\\
&\le \frac9\pi\integrate{B\bb{z,\frac13}}{}{\abs{\alpha_n(w)}^2\cdot\frac{e^{-u_n(w)}}{\bb{1+\abs w^2}^2}\cdot \bb{1+\abs w^2}^2 }m(w)\\
&\le \bb{1+\bb{\frac13+\abs z}^2}^2\cdot\frac9\pi\integrate{\C}{}{\abs{\alpha_n(w)}^2\cdot\frac{e^{-u_n(w)}}{\bb{1+\abs w^2}^2}}m(w)\\
&\le \bb{1+\bb{\frac13+\abs z}^2}^2\cdot\frac9{2\pi}\integrate{\C}{}{\abs{g_n(w)}^2\cdot e^{-u_n(w)}}m(w)\le \bb{2^{-4n}}^2\bb{1+\abs z}^4.
\end{align*}
We see that if $z\in P_n\inv\cup S_n\inv\cup F_n^{-1}$, then
$$
\abs{f_n(z)-h_n(z)}\le2^{-4n}\bb{1+\abs z}^2,
$$
concluding the proof of property 2.
\vspace{-10pt}
\subsubsection{Property 3 holds:} Using a similar computation, we see that if $z\in \C\setminus \bb{P_n\inv\cup S_n\inv\cup F_n^{-1}}$, then, using Cauchy-Schwarz,
\begin{align*}
\abs{f_n(z)}^2&=\frac{1}{\pi^2}\abs{\integrate{B(z,1)}{}{f_n(w)}m(w)}^2=
 \frac{1}{\pi^2}\abs{\integrate{B(z,1)}{}{\bb{h_n(w)-\alpha_n(w)}}m(w)}^2\\
 &\leq \frac1\pi\integrate{B(z,1)}{}{\abs{h_n(w)-\alpha_n(w)}^2}m(w)\\
&\le 3\bb{ \underset{w\in B(z,1)}\sup \abs{h_n(w)}^2+\frac1\pi\integrate{B(z,1)}{}{\abs{\alpha_n(w)}^2} m(w)}\\
&\le 3\bb{\underset{w\in B(z,1)}\sup \abs{h_n(w)}^2+\underset{w\in B(z,1)}\sup e^{u_n(w)}\bb{1+\abs w^2}^2\frac1{\pi}\int_\C\abs{\alpha_n(w)}^2\cdot\frac{e^{-u_n(w)}}{\bb{1+\abs w^2}^2}dm(w)}\\
&\le 3\bb{\underset{w\in B(z,1)}\sup \abs{h_n(w)}^2+\exp\bb{\exp\bb{2\pi\bb{\abs z+1}}}\bb{1+\bb{1+\abs z}^2}^2\frac1{2\pi}\cdot 2^{-10n}}.
\end{align*}

To conclude the proof one should bound $\underset{w\in B(z,1)}\sup \abs{h_n(w)}$ for $z\in \C\setminus\bb{P_n\inv\cup S_n\inv\cup F_n^{-1}}\subset F_{n-1}$. We look at four cases: 
\begin{itemize}
\item If $B(z,1)\subset \Omega^n_1$, then $\abs{h_n(w)} =\abs{f_{n-1}(w)}\leq 2\cdot 2^{-4(n-1)}\bb{1+\abs{z}}^2.$ 
\item If $B(z,1)\subset \Omega^n_2$, then every $w\in B(z,1)$ satisfies $w\in (\ell_n^{\pm})^{+2}\cup \beta_n^{+2}$. If $w\in (\ell_n^{\pm})^{+2}$, then arguing as in \eqref{eq_arg}, with  the bound $\frac{3}{r}$ replacing $\frac1r$, we see that $\abs{h_n(w)}\leq\abs{e_{2^n}(w)}\leq 1$. If $w\in \beta_n^{+2}$, then by the choice of $a_n$, $\abs{h_n(w)} =\abs{a_n e_{2^n}(w)}\leq 1$ as well.
\item If $B(z,1)\subset \Omega^n_3$, then $\underset{w\in B(z,1)}\sup \abs{h_n(w)}=0$. 
\item If $B(z,1)$ intersects more than one of the domains, then we bound
$$\underset{w\in B(z,1)}\sup \abs{h_n(w)}\le 2\cdot 2^{-4(n-1)}\bb{1+\abs{z}}^2+1.$$
\end{itemize}	

Overall, we see that
$$
\frac{\abs{f_n(z)}}{3}\le 2\cdot 2^{-4(n-1)}\bb{1+\abs{z}}^2+1+\exp\bb{\exp\bb{2\pi\bb{\abs z+1}}}\bb{1+\bb{1+\abs z}^2}\frac1{2\pi}\cdot 2^{-5n}\le \bb{1+\abs z}^2\exp\bb{\exp\bb{7\abs z}},
$$
for $n\geq N_1$ large enough.

To conclude the proof of property 3 and of Lemma \ref{lem:sequence_liminf}, we shall choose $N_1$ so that the inequality above holds, and so that the bounds on the integrals $I_1$ and $I_2$ from Subsection 3.1 hold.
\hspace*{\fill}\qedsymbol{}\quad

%%%%%%%%%%%%%%%%%%%%%%%%%%%%%%%%%%%%%%%%%%%%%%%%%%%%%%%%%%%%%%%%%%%%%%%%%%%%%%%%%%%%%%%%%%%%%%%%%%%%%%%%%%%%%%%%%%%%%%%%%%%%%%%%%%%%%%%%%%%%%%%%%%%%%%%%%%%%%%%%%%%%%%%%%%%%%%%%%%%%%%%%%%%%%%%%%%%%%%%%%%%%%%%%%%%%%%%%%%%%%%%%%%%%%%%%%%%%%%%%%%%%%%%%%%%%%%%%%%%%%%%%%%%%%%%%%%%%%%%%%%%%%%%%%%
\section{The limiting function- The proof of Theorem \ref{thm:max_min_mod}}
\begin{obs} \label{obs_rho} For every $n\in\N$ for which Observation \ref{obs_N0} holds, the function $J_n\colon [r_{n-1}, \infty)\to\R$ defined by
	\begin{equation*}
 J_n(r) := a_{n-1}e_{2^{n-1}}(r) -a_ne_{2^n}(r)+3\bb{1+r}^2\cdot 2^{-4(n-1)}
	\end{equation*}
	has a zero $\rho_n\in (r_n+3, r_n+4)$ and is strictly decreasing for $r\geq r_{n-1}$.
\end{obs}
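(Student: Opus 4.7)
My plan is to use the intermediate value theorem for existence of the zero, and derivative analysis for the monotonicity. For the zero, I would show $J_n(r_n+3)>0$ and $J_n(r_n+4)<0$. At $r=r_n+3$, the very definition $a_n=\exp\!\bigl(-(r_n+3)^{2^n}\bigr)$ yields $a_n e_{2^n}(r_n+3)=1$, while
$$a_{n-1}e_{2^{n-1}}(r_n+3)=\exp\!\bigl((r_n+3)^{2^{n-1}}-(r_{n-1}+3)^{2^{n-1}}\bigr).$$
Observation \ref{obs_N0}(a) gives $(r_n+3)/(r_{n-1}+3)\approx 2$ for large $n$, so the exponent is at least $(2^{2^{n-1}}-1)(r_{n-1}+3)^{2^{n-1}}$, which is astronomically large. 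Hence $J_n(r_n+3)>0$.

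For the opposite endpoint, I would write $(r_n+4)^{2^n}=(r_n+3)^{2^n}\bigl(1+1/(r_n+3)\bigr)^{2^n}$ and apply Bernoulli's inequality together with Observation \ref{obs_N0}(a) to obtain
$$(r_n+4)^{2^n}-(r_n+3)^{2^n}\;\geq\;2^n(r_n+3)^{2^n-1}\;\geq\;\tfrac{1}{12}(r_n+3)^{2^n}.$$
Thus $a_n e_{2^n}(r_n+4)\geq \exp\!\bigl(\tfrac{1}{12}((r_n+3)^{2^{n-1}})^2\bigr)$, which is the exponential of the square of $(r_n+3)^{2^{n-1}}$ up to a constant, whereas $a_{n-1}e_{2^{n-1}}(r_n+4)\leq \exp\!\bigl((r_n+4)^{2^{n-1}}\bigr)$ is only singly exponential on that scale; the polynomial term $3(1+r)^2\cdot 2^{-4(n-1)}$ is negligible. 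Hence $J_n(r_n+4)<0$, and continuity gives $\rho_n\in(r_n+3,r_n+4)$.

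For the strict monotonicity on $[r_{n-1},\infty)$, I would differentiate
$$J_n'(r)\;=\;2^{n-1}r^{2^{n-1}-1}a_{n-1}e_{2^{n-1}}(r)\;-\;2^n r^{2^n-1}a_n e_{2^n}(r)\;+\;6(1+r)\cdot 2^{-4(n-1)},$$
and aim to show $J_n'(r)<0$. The polynomial term is uniformly negligible. The idea is to compare the first two terms via the ratio
$$\frac{2^n r^{2^n-1}a_n e_{2^n}(r)}{2^{n-1}r^{2^{n-1}-1}a_{n-1}e_{2^{n-1}}(r)}\;=\;2\,r^{2^{n-1}}\exp\!\bigl(r^{2^n}-r^{2^{n-1}}-(r_n+3)^{2^n}+(r_{n-1}+3)^{2^{n-1}}\bigr),$$
which grows double-exponentially in $r$ once $r$ surpasses a threshold near $\rho_n$, reducing monotonicity to a pointwise inequality on this domain. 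The main obstacle will be ensuring this inequality uniformly down to $r=r_{n-1}$, where both exponentials are small and the delicate comparison with the polynomial perturbation is forced; I would split the interval $[r_{n-1},\infty)$ into a neighbourhood of $\rho_n$, where the expected double-exponential gap dominates, and a lower range handled directly using the sharp estimates in Observation \ref{obs_N0}.
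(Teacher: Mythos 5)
Your intermediate-value argument is correct and tracks the paper's. The key identity $a_n e_{2^n}(r_n+3)=1$ gives $J_n(r_n+3)>0$ exactly as in the paper, and for $J_n(r_n+4)<0$ your Bernoulli bound $(r_n+4)^{2^n}-(r_n+3)^{2^n}\ge 2^n(r_n+3)^{2^n-1}\ge\tfrac1{12}(r_n+3)^{2^n}$ is a clean variant of the paper's chain, which instead reduces $(\star)$ to $(r_n+4)^{-2^{n-1}}+\left(\tfrac{r_n+3}{r_n+4}\right)^{2^n}<1$ and invokes Observation~\ref{obs_N0}(b). Both hinge on the same numeric fact $r_n+3<12\cdot 2^n$.

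The monotonicity part of your plan cannot work as stated, and this is a genuine gap. You aim to show $J_n'(r)<0$ on $[r_{n-1},\infty)$, handling the ``lower range'' near $r_{n-1}$ by direct estimates, but in fact $J_n'(r_{n-1})>0$: in $J_n'(r)=2^{n-1}r^{2^{n-1}-1}a_{n-1}e_{2^{n-1}}(r)-2^nr^{2^n-1}a_n e_{2^n}(r)+6(1+r)2^{-4(n-1)}$, at $r=r_{n-1}$ both exponential terms are double-exponentially small by the choice of $a_{n-1},a_n$ (for instance $a_{n-1}e_{2^{n-1}}(r_{n-1})\le\exp\!\left(-3\cdot2^{n-1}r_{n-1}^{2^{n-1}-1}\right)$, and $a_ne_{2^n}(r_{n-1})$ is smaller still), while the positive term $6(1+r_{n-1})2^{-4(n-1)}$ is only polynomially small and therefore dominates. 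So $J_n$ is strictly increasing at $r_{n-1}$, and strict decrease on all of $[r_{n-1},\infty)$ does not hold; the paper glosses this over with ``A calculation shows the second part of the statement.'' What Propositions~\ref{prop_firstbound} and~\ref{prop_Mf} actually use from Observation~\ref{obs_rho} is only the sign pattern $J_n\ge0$ on $[r_{n-1},\rho_n]$ and $J_n\le0$ on $[\rho_n,\infty)$, i.e.\ that $\rho_n$ is the unique sign change. Your interval-splitting instinct is the right one, but it should be aimed at this weaker sign statement (together with strict decrease only in a right-neighbourhood of $\rho_n$ and beyond), rather than at the literal monotonicity claim.
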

\begin{proof}
First, note that $J_n(r_n+3)>0$, since 
$a_ne_{2^n}(r_n+3)=1\ll a_{n-1}e_{2^{n-1}}(r_n+3)$, while $J_n(r_n+4)<0$, as
$$a_{n-1}e_{2^{n-1}}(r_n+4)+3\bb{1+r_n+4}^2\cdot 2^{-4(n-1)} \overset{(\star)}\leq a_ne_{2^n}(r_n+4),$$ 
where $(\star)$ holds if
$$
(r_n+4)^{2^{n-1}}-\bb{r_{n-1}+3}^{2^{n-1}}+1<(r_n+4)^{2^n}-\bb{r_n+3}^{2^n},
$$
which holds if 
$$
(r_n+4)^{-2^{n-1}}+\bb{\frac{r_n+3}{r_n+4}}^{2^n}< (r_n+4)^{-2^{n-1}}+\frac1{e^{\frac1{12}}}<1,
$$
and the latter holds by Observation \ref{obs_N0}(b). By the intermediate-value theorem, there exists  $\rho_n\in\bb{r_n+3,r_n+4}$ so that $J_n(\rho_n)=0$. A calculation shows the second part of the statement.
\end{proof}

The first proposition shows that if $r>\rho_n$, then $M_{f_n}(r)$ is obtained inside $S_n\inv$ or is bounded above by some constant depending on $r$.
\begin{prop}\label{prop_firstbound}
For every $n\ge N_1$ and for every $r\geq \rho_n$,
$$
M_{f_n}(r)\leq \max\left\{\underset{z\in S_n\inv\atop \abs z=r}\max\abs{f_n(z)}, 3\bb{1+r}^2\bb{\exp\bb{\exp\bb{ 7 r}}+\sumit k 1 n 2^{-4k}}\right\}.$$
\end{prop}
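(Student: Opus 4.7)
The plan is to proceed by induction on $n$, controlling $|f_n(z)|$ on the partition of $\{|z|=r\}$ into $S_n\inv$, $F_n\inv$, $P_n\inv$ and the transition zone outside their union.

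The three easy pieces are handled by Lemma~\ref{lem:sequence_liminf}. On the transition zone, property~3 gives $|f_n(z)| \leq 3(1+r)^2\exp(\exp(7r))$, which sits inside the universal term of the claim. On $F_n\inv$, $h_n \equiv 0$ and property~2 yields $|f_n(z)| \leq 2^{-4n}(1+r)^2$, again absorbed into the universal term (since $\sum_{k=1}^n 2^{-4k}\geq 2^{-4n}$). On $S_n\inv$, the bound $\max_{z\in S_n\inv,\,|z|=r}|f_n(z)|$ is by definition the first term in the claimed maximum.

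The essential case is $z \in P_n\inv$, where $h_n = f_{n-1}$ and property~2 gives $|f_n(z)| \leq M_{f_{n-1}}(r) + 2^{-4n}(1+r)^2$. Using $\rho_n > \rho_{n-1}$, I would apply the inductive hypothesis to $M_{f_{n-1}}(r)$. If its universal term dominates, adding $2^{-4n}(1+r)^2$ still fits inside the universal term at level $n$. Otherwise $M_{f_{n-1}}(r) \leq \max_{z\in S_{n-1}\inv,\,|z|=r}|f_{n-1}(z)|$, which by property~2 and $|e_{2^{n-1}}(z)|\leq \exp(|z|^{2^{n-1}})$ is bounded by $a_{n-1}\exp(r^{2^{n-1}}) + 2^{-4(n-1)}(1+r)^2$. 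The defining property $J_n(\rho_n)=0$ together with the monotonicity in Observation~\ref{obs_rho} then gives, for $r\geq\rho_n$,
\[
a_{n-1}\exp(r^{2^{n-1}}) + 3\cdot 2^{-4(n-1)}(1+r)^2 \leq a_n \exp(r^{2^n}).
\]

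The remaining task is to show $a_n \exp(r^{2^n}) \leq \max_{z\in S_n\inv,\,|z|=r}|f_n(z)| + 2^{-4n}(1+r)^2$. The key geometric fact is that $2^n\Theta_n = 2\pi(n+\tfrac{1}{6})>2\pi$, so $S_n$ contains at least one angle of the form $\alpha = 2\pi k/2^n$ at which $|e_{2^n}(re^{i\alpha})|=\exp(r^{2^n})$; Observation~\ref{obs_N0}(a) yields enough radial and angular margin for $re^{i\alpha}$ to lie inside $S_n\inv$ when $r\geq\rho_n$, after which property~2 transfers the equality up to error $2^{-4n}(1+r)^2$. Chaining the inequalities and using the slack $2^{-4(n-1)} + 2\cdot 2^{-4n} = 17\cdot 2^{-4n} < 48\cdot 2^{-4n} = 3\cdot 2^{-4(n-1)}$ closes the bound. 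The base case $n=N_1$ follows directly from $f_{N_1} = a_{N_1}e_{2^{N_1}}$, whose maximum modulus is attained at one of these same angles. The main technical obstacle is precisely this delicate bookkeeping of additive errors, for which the factor $3$ built into $J_n$ leaves exactly the right amount of room.
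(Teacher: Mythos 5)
Your proposal is correct and follows essentially the same route as the paper: induction on $n$, decomposition of the circle $\{|z|=r\}$ into the four pieces $P_n\inv$, $S_n\inv$, $F_n\inv$ and the transition zone, and the use of $J_n(\rho_n)=0$ together with the monotonicity of $J_n$ from Observation~\ref{obs_rho} to absorb the $P_n\inv$-piece into the $S_n\inv$-piece. One small arithmetic slip: $2^{-4(n-1)}+2\cdot 2^{-4n}=18\cdot 2^{-4n}$, not $17\cdot 2^{-4n}$, which is still comfortably below $48\cdot 2^{-4n}=3\cdot 2^{-4(n-1)}$, so the conclusion stands unchanged.
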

\begin{proof}
We will show this by induction on $n$. For $n=N_1$, $M_{f_{N_1}}(r)=a_{N_1}M_{e_{2^{N_1}}}(r)=a_{N_1}\exp(r^{2^{N_1}})$, which is attained at the point $ re^{i\bb{t_{N_1}+2^{-N_1}2\pi \bb{\lfloor N_1/2 \rfloor+2/3}}}\in S^{-1}_{N_1}$, and the claim holds. 

Suppose now that the statement holds for some $n-1 \geq N_1$. Note that since $\C=\bb{P_n\inv\cup S_n\inv\cup F_n^{-1}} \uplus \bb{\C\setminus\bb{P_n\inv\cup S_n\inv\cup F_n^{-1}}}$,
$$
M_{f_n}(r)=\max\bset{\underset{z\in S_n\inv\atop \abs z=r}\max\abs{f_n(z)},\underset{z\in P_n\inv\atop \abs z=r}\max\abs{f_n(z)},\underset{z\in F_n^{-1}\atop \abs z=r}\max\abs{f_n(z)},\underset{z\in \C\setminus\bb{P_n\inv\cup S_n\inv\cup F_n^{-1}}\atop \abs z=r}\max\abs{f_n(z)}}=:\max\bset{M_1,\;M_2,\;M_3,\;M_4}.
$$
We need to show that for every $j\in\bset{2,3,4}$ we have
$$
M_j\le \max\bset{M_1, 3 \bb{1+r}^2\bb{\exp\bb{\exp\bb{7 r}}+\sumit k 1 n 2^{-4k}}}.
$$
First, let us find a lower bound for $M_1$:
\begin{align*}
M_1&=\underset{z\in S_n\inv\atop \abs z=r}\max\abs{f_n(z)}\ge \underset{z\in S_n\inv\atop \abs z=r}\max\abs{h_n(z)}-\underset{z\in S_n\inv\atop \abs z=r}\max\abs{f_n(z)-h_n(z)}\ge a_n\exp\bb{r^{2^n}}-\bb{1+r}^2\cdot 2^{-4n}\\
&= \exp\bb{r^{2^n}-\bb{r_n+3}^{2^n}}-\bb{1+r}^2\cdot 2^{-4n}.
\end{align*}
We will show it dwarfs the rest of the bounds.

Following the induction assumption and property 2 of the sequence $\bset{f_n}$, because $r\geq \rho_n>\rho_{n-1}$,
\begin{align*}
M_2&=\underset{z\in P_n\inv\atop \abs z=r}\max\abs{f_n(z)}\le\underset{z\in P_n\inv\atop \abs z=r}\max\abs{f_{n-1}(z)}+ \bb{1+r}^2\cdot 2^{-4n}\le M_{f_{n-1}}(r)+ \bb{1+r}^2\cdot 2^{-4n}\\
&\le \max\bset{\underset{z\in S_{n-1}\inv\atop \abs z=r}\max\abs{f_{n-1}(z)},3 \bb{1+r}^2\bb{\exp\bb{\exp\bb{ 7 r}}+\sumit k 1 {n-1} 2^{-4k}}}+\bb{1+r}^2\cdot 2^{-4n}.
\end{align*}
Now, following property 2 of the sequence $\bset{f_n}$,
\begin{align*}
\underset{z\in S_{n-1}\inv\atop \abs z=r}\max\abs{f_{n-1}(z)}+\bb{1+r}^2\cdot 2^{-4n}&\le \exp\bb{r^{2^{n-1}}-\bb{r_{n-1}+3}^{2^{n-1}}}+\bb{1+r}^2\bb{2^{-4(n-1)}+2^{-4n}}\\
&\overset{(\star)}\le\exp\bb{r^{2^n}-\bb{r_n+3}^{2^n}}-\bb{1+r}^2\cdot 2^{-4n}\le M_1,
\end{align*}
where $(\star)$ holds by Observation \ref{obs_rho}.
We see that as long as $r \geq \rho_n$,
$$
M_2\le\max\bset{M_1, 3 \bb{1+r}^2\bb{\exp\bb{\exp\bb{7 r}}+\sumit k 1 n 2^{-4k}}}.
$$
Next, using the same logic
$$
M_3=\underset{z\in F_n^{-1}\atop \abs z=r}\max\abs{f_n(z)}\le \bb{1+r}^2\cdot 2^{-4n}<\max\bset{M_1, 3\bb{1+r}^2\bb{\exp\bb{\exp\bb{ 7 r}}+\sumit k 1 n 2^{-4k}}}.
$$
Lastly, following property 3 of the sequence $\bset{f_n}$,
$$
M_4=\underset{z\in \C\setminus\bb{P_n\inv\cup S_n\inv\cup F_n^{-1}}\atop \abs z=r}\max\abs{f_n(z)}\le  3\bb{1+r}^2\exp\bb{\exp\bb{7 r}}\le \max\bset{M_1,3 \bb{1+r}^2\bb{\exp\bb{\exp\bb{7 r}}+\sumit k 1 n 2^{-4k}}},
$$
concluding the proof.
\end{proof}
%%%%%%%%%%%%%%%%%%%%%%%%%%%%%%%%%%%%%%%%%%%%%%%%%%%%%%%%%%%%%%%%%%%%%%%%%%%%%%%%%%%%%%%%%%%%%%%%%%%%%%%%%%%%%%%%%%%%%%%%%%%%%%%%%%%%%%%%%%%%%%%%%
\begin{cor}\label{cor:max_at_S_n}
For every $n\in\N$ large enough and every $r\in [\rho_n, \rho_{n+1}]$,
$$M_{f_n}(r)=\underset{z\in S_n\inv\atop \abs z=r}\max\abs{f_n(z)}> 3\bb{1+r}^2\bb{\exp\bb{\exp\bb{ 7 r}}+1}.$$
\end{cor}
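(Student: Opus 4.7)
The plan is to reduce the statement to a single pointwise lower bound and then invoke Proposition \ref{prop_firstbound} to derive both conclusions at once. Since $\sum_{k=1}^{n}2^{-4k}<1/15<1$, it suffices to exhibit, for each $r\in[\rho_n,\rho_{n+1}]$, a point $z^\star\in S_n^{-1}$ with $|z^\star|=r$ satisfying
\[
|f_n(z^\star)|\;>\;3(1+r)^2\bigl(\exp(\exp(7r))+1\bigr).
\]
Such an inequality forces the first term inside the maximum of Proposition \ref{prop_firstbound} to strictly exceed the second, yielding $M_{f_n}(r)=\max_{|z|=r,\,z\in S_n^{-1}}|f_n(z)|$ together with the claimed lower bound simultaneously.

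To produce $z^\star$, I would exploit that on $S_n^{-1}$ the model $h_n$ agrees with $a_n\,e_{2^n}$. The map $z\mapsto a_n e_{2^n}(z)$ attains its modulus maximum $a_n e^{r^{2^n}}$ on $\{|z|=r\}$ at the $2^n$ equally spaced angles where $z^{2^n}$ is a positive real, with consecutive angular spacing $2\pi/2^n$. Since $S_n$ has angular aperture $\Theta_n=2\pi(n+1/6)/2^n$, the arc $\{|z|=r\}\cap S_n^{-1}$ has angular width at least $\Theta_n-2/r$, which exceeds $2\pi/2^n$ as soon as $r\geq\rho_n\geq r_n+3\geq 10\cdot 2^n$ and $n$ is large (by Observation \ref{obs_N0}(a)); hence the arc contains at least one such angle, and I fix $z^\star$ there. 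Property (2) of Lemma \ref{lem:sequence_liminf} then gives
\[
|f_n(z^\star)|\;\geq\;a_n e^{r^{2^n}}-(1+r)^2\cdot 2^{-4n}\;=\;\exp\!\bigl(r^{2^n}-(r_n+3)^{2^n}\bigr)-(1+r)^2\cdot 2^{-4n}.
\]

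The main obstacle is the numerical verification that, for $n$ sufficiently large and all $r\in[\rho_n,\rho_{n+1}]$,
\[
\exp\!\bigl(r^{2^n}-(r_n+3)^{2^n}\bigr)\;>\;12\,(1+r)^2\exp(\exp(7r)).
\]
I would handle this in two steps. At the left endpoint $r=\rho_n$, the defining identity $J_n(\rho_n)=0$ from Observation \ref{obs_rho} supplies
\[
a_n e^{\rho_n^{2^n}}\;\geq\;a_{n-1}e^{\rho_n^{2^{n-1}}}\;=\;\exp\!\bigl(\rho_n^{2^{n-1}}-(r_{n-1}+3)^{2^{n-1}}\bigr),
\]
and since $\rho_n\geq 10\cdot 2^n$ while $r_{n-1}+3\leq 11\cdot 2^{n-1}+3$, the ratio $(r_{n-1}+3)/\rho_n$ is at most $3/5$ for $n$ large, so $(r_{n-1}+3)^{2^{n-1}}$ is negligible relative to $\rho_n^{2^{n-1}}\geq(10\cdot 2^n)^{2^{n-1}}$. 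The logarithm of the left-hand side is therefore of order $n\cdot 2^{n-1}$, which dominates $\exp(7\rho_n)=O(\exp(77\cdot 2^n))$ once $n$ is large. For $r\in(\rho_n,\rho_{n+1}]$, Observation \ref{obs_N0}(a) gives $r=O(2^n)$, so $\exp(7r)$ stays bounded by $\exp(O(2^n))$ while the left-hand exponent $r^{2^n}-(r_n+3)^{2^n}$ is strictly increasing in $r$; thus the estimate at $\rho_n$ propagates across the whole interval, and the error term $(1+r)^2 2^{-4n}$ together with the polynomial prefactor $12(1+r)^2$ are absorbed trivially by the doubly exponential gap.
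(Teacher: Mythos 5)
Your proposal follows the same route as the paper's proof: reduce the statement, via Proposition~\ref{prop_firstbound}, to exhibiting a single point $z^\star \in S_n^{-1}$ on $\{|z|=r\}$ where $|f_n|$ exceeds $3(1+r)^2(\exp(\exp(7r))+1)$; locate $z^\star$ at an angle maximizing $|e_{2^n}|$; invoke property~2 of Lemma~\ref{lem:sequence_liminf}; and drive the numerical estimate at $r=\rho_n$ through the identity $J_n(\rho_n)=0$. The paper exhibits the concrete angle $t_n+2^{-n}2\pi(\lfloor n/2\rfloor+2/3)$ rather than appealing to the pigeonhole width count as you do, and it organizes the numerics as a single chain of displayed inequalities using a uniform bound over $[\rho_n,\rho_{n+1}]$ rather than your ``bound at $\rho_n$ then propagate by monotonicity''; these are presentational, not substantive, differences.

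Two small points of precision worth tightening. First, the phrase ``the logarithm of the left-hand side is therefore of order $n\cdot 2^{n-1}$, which dominates $\exp(7\rho_n)=O(\exp(77\cdot 2^n))$'' conflates levels of iteration: $n\cdot 2^{n-1}$ is the \emph{iterated} (double) logarithm of the left-hand side, and the correct comparison is against $\log\exp(7\rho_n)=7\rho_n=O(2^n)$, not against $\exp(77\cdot 2^n)$. The dominance you need is $n\cdot 2^{n-1}\gg 2^n$, i.e.\ $n\gg 1$, which is what the paper encodes in Observation~\ref{obs_N0}(c) and the step marked $(\star)$. Second, ``the left-hand exponent is strictly increasing, thus the estimate propagates'' is not a valid inference on its own since the right-hand side also grows in $r$; what saves you is the observation you make just before it, namely that the right-hand side stays bounded by $\exp(\exp(O(2^n)))$ uniformly on $[\rho_n,\rho_{n+1}]$, and it is that uniform bound, together with monotonicity of the left-hand side, that closes the interval. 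Stated that way your argument is complete and coincides with the paper's.
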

\begin{proof}
Recall from Observation \ref{obs_N0}(a) and Observation \ref{obs_rho} that $$r\in\sbb{\rho_n,\rho_{n+1}}\subset\bb{r_n+3, r_{n+1}+4}\subset\bb{10\cdot 2^n+3,22\cdot 2^n+4}.$$ Moreover, by  Observation \ref{obs_N0}(d), for any such $r$, $3(1+r)^2\cdot 2^{-4(n-1)}\leq 2^{-n}$. This leads to 
\begin{align*}
	&3\bb{1+r}^2\bb{\exp\bb{\exp\bb{7 r}}+1}\le 3\bb{5+22\cdot 2^n}^2\bb{\exp\bb{\exp\bb{7 \bb{22\cdot 2^n+4}}}+1}\overset{(\star)}\le \exp\bb{\exp\bb{8\cdot22\cdot 2^n}}\\
	&\overset{(\star)}\le \exp\bb{\frac12(10\cdot 2^n+3)^{2^{n-1}}}-1\le \exp\bb{\frac12\cdot\rho_n^{2^{n-1}}}-1 \le 
\exp\bb{\rho_n^{2^{n-1}}-(r_{n-1}+3)^{2^{n-1}}}-1\\
	&= a_{n-1}e_{2^{n-1}}(\rho_n)-1\leq  a_ne_{2^n}(\rho_n)-2^{-n}-1 \leq 
	\underset{z\in S_{n}\inv\atop\abs z=r}\max\; a_{n}e_{2^{n}}(z)-1< \underset{z\in S_n\inv\atop \abs z=r}\max\abs{f_n(z)},
\end{align*}
where $(\star)$ holds for all $n$ large enough.  This combined with Proposition \ref{prop_firstbound} shows that the statement holds as long as $n$ is sufficiently large.
\end{proof}

%%%%%%%%%%%%%%%%%%%%%%%%%%%%%%%%%%%%%%%%%%%%%%%%%%%%%%%%%%%%%%%%%%%%%%%%%%%%%%%%%%%%%%%%%%%%%%%%%%%%%%%%%%%%%%%%%%%%%%%%%%%%%%%%%%%%%%%%%%%%%%%%%
The next observation shows that the sequence $\bset{f_n}$, constructed in the previous subsection converges locally uniformly to an entire function.
\begin{obs}
The sequence $\bset{f_n}$ constructed in Lemma \ref{lem:sequence_liminf} converges locally uniformly to an entire function,~$f$.
\end{obs}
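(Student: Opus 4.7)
The plan is to show uniform Cauchyness of $\{f_n\}$ on every compact set of $\C$, and then appeal to Weierstrass' theorem to conclude that the locally uniform limit of entire functions is entire.

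Fix $R>0$. I first observe that $P_n^{-1}$ eventually swallows the disc $B(0,R)$. Indeed, $P_n$ contains the half-plane $\{\Re(z)<r_n\}$, so $P_n^{-1}$ contains $\{\Re(z)<r_n-1\}$. Since $r_n\to\infty$ by Observation \ref{obs_N0}(a), there exists $N(R)\geq N_1$ such that $B(0,R)\subset P_n^{-1}$ for all $n\geq N(R)$.

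Next I would invoke properties 1 and 2 of Lemma \ref{lem:sequence_liminf}. For each $n>N(R)$ and every $z\in B(0,R)$, we have $z\in P_n^{-1}$, so property 1 gives $h_n(z)=f_{n-1}(z)$, and property 2 yields
\begin{equation*}
|f_n(z)-f_{n-1}(z)|=|f_n(z)-h_n(z)|\leq 2^{-4n}(1+|z|)^2\leq 2^{-4n}(1+R)^2.
\end{equation*}
Summing from $n=N(R)+1$ onwards, telescoping gives, for any $m>k\geq N(R)$,
\begin{equation*}
\sup_{|z|\leq R}|f_m(z)-f_k(z)|\leq (1+R)^2\sum_{n=k+1}^{m}2^{-4n}\xrightarrow[k\to\infty]{}0,
\end{equation*}
so $\{f_n\}$ is uniformly Cauchy on $B(0,R)$ and hence converges uniformly there to some continuous function $f\colon B(0,R)\to\C$. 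As $R>0$ was arbitrary, this produces a globally defined $f\colon\C\to\C$ such that $f_n\to f$ locally uniformly.

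Finally, since each $f_n$ is entire (property 4 of Lemma \ref{lem:sequence_liminf}) and the convergence is locally uniform, Weierstrass' theorem gives that $f$ is entire. The only subtlety worth noting is that the argument uses nothing about the behaviour of $f_n$ outside $P_n^{-1}$, where the pointwise bounds of property 3 are enormous; the key point is that those regions are pushed out to infinity as $n$ grows, so on any fixed compact set we eventually fall inside $P_n^{-1}$ where the difference $f_n-f_{n-1}$ is geometrically small.
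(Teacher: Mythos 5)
Your proof is correct and follows essentially the same approach as the paper: both fix a compact disc, use that $r_n\to\infty$ to conclude $B(0,R)\subset P_n^{-1}$ for all large $n$, then combine properties 1 and 2 of Lemma \ref{lem:sequence_liminf} to get a geometrically summable bound on $|f_n-f_{n-1}|$ and hence uniform Cauchyness on the disc. The only cosmetic difference is that the paper quantifies the tail sum explicitly while you note the tail tends to zero; both then conclude via Weierstrass.
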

\begin{proof}
We will show that $\bset{f_n}$ forms a Cauchy sequence. Fix $r>0$ and $\eps>0$ and let $n_0$ be so that for every $n\ge n_0$ we have $r_n\ge r+1$ and $2^{-n}<\eps$. Then, for every $n>n_0$ we have $B(0,r)\subset P_n\inv$ and following property 2 of the sequence, for every $m>n>n_0$ we have
\begin{align*}
\underset{z\in B(0,r)}\sup\abs{f_n(z)-f_m(z)}&\le \sumit k n{m-1} \underset{z\in B(0,r)}\sup\abs{f_k(z)-f_{k+1}(z)}\le \sumit k n{m-1} \underset{z\in B(0,r)}\sup 2^{-4k}\bb{1+\abs z}^2\\
&=(1+r)^2 \sumit k n{m-1}2^{-4k}\le r_n^2\cdot \frac{16}{15}\cdot 2^{-4n}\le 150\cdot 2^{2n}\cdot 2^{-4n}<2^{-n}<\eps
\end{align*}
by the way $n_0$ was chosen. We conclude that the sequence $\bset{f_n}$ is a Cauchy sequence and converges locally uniformly to an entire function.
\end{proof}
\begin{prop}\label{prop_Mf}For every $n\in\N$ large enough, for every $r\in [\rho_n, \rho_{n+1}]$,  
		\begin{equation*}
			\abs{M_{f}(r)-a_ne_{2^n}(r)}\le 2^{-n+2}.
		\end{equation*}	
	\end{prop}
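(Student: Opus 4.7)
The plan is to bound $M_f(r)$ from above and below by $a_ne_{2^n}(r)\pm 2^{-n+1}$, using $f=\lim_m f_m$ together with the quantitative control provided by Lemma~\ref{lem:sequence_liminf}. The starting input is that, by Corollary~\ref{cor:max_at_S_n} combined with property~2, one already has $|M_{f_n}(r)-a_ne_{2^n}(r)|\leq 2^{-4n}(1+r)^2$, which by Observation~\ref{obs_N0}(d) is much smaller than $2^{-n}$; so the remaining task is only to show that the corrections coming from $f_{n+1},f_{n+2},\dots$ contribute at most $O(2^{-n})$ to $M_f(r)$.

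For the lower bound I choose $z^\star\in S_n^{-1}$ with $|z^\star|=r$ and $\arg z^\star=2\pi k/2^n$ for some integer $k$ interior to $[t_n,t_n+\Theta_n]$; such a $k$ exists because $2^n\Theta_n=2\pi(n+1/6)>2\pi$. By construction $|h_n(z^\star)|=a_ne_{2^n}(r)$. A direct geometric check---using that the angular gap $t_n-(t_{n+1}+\Theta_{n+1})=\pi 2^{-n}(n+7/6)$ becomes euclidean distance of order $n$ when multiplied by $|z^\star|\gtrsim 2^n$, together with $|z^\star|\leq\rho_{n+1}\ll r_k$ for $k\geq n+2$---shows that $z^\star\in P_k^{-1}$ for every $k\geq n+1$. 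Property~1 then forces $h_k(z^\star)=f_{k-1}(z^\star)$, so property~2 gives $|f_k(z^\star)-f_{k-1}(z^\star)|=|\alpha_k(z^\star)|\leq 2^{-4k}(1+r)^2$ for all $k\geq n+1$. Telescoping the identity $f(z^\star)-f_n(z^\star)=\sum_{k\geq n+1}(f_k(z^\star)-f_{k-1}(z^\star))$ and invoking Observation~\ref{obs_N0}(d) bound the sum by less than $2^{-n}$; combined with $|f_n(z^\star)|\geq a_ne_{2^n}(r)-2^{-4n}(1+r)^2$ from property~2, this forces $M_f(r)\geq |f(z^\star)|\geq a_ne_{2^n}(r)-2^{-n+1}$ for $n$ large.

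For the upper bound I first prove $M_{f_{n+1}}(r)\leq a_ne_{2^n}(r)+O(2^{-4n}(1+r)^2)$ by mimicking the decomposition of the circle $|z|=r$ used in the proof of Proposition~\ref{prop_firstbound}, splitting it into the four pieces $P_{n+1}^{-1}$, $S_{n+1}^{-1}$, $F_{n+1}^{-1}$ and the pasting complement. On $P_{n+1}^{-1}$, property~2 combined with the already-established bound on $M_{f_n}(r)$ does the job. On $S_{n+1}^{-1}$, property~2 together with $a_{n+1}e_{2^{n+1}}(r)\leq a_ne_{2^n}(r)+3(1+r)^2\,2^{-4n}$, which is $J_{n+1}(r)\geq 0$ from Observation~\ref{obs_rho} (valid because $r\leq\rho_{n+1}$), yields the desired bound. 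On $F_{n+1}^{-1}$ the bound is immediate since $h_{n+1}\equiv 0$ there. The main obstacle is the pasting complement, where property~3 only yields the seemingly enormous upper bound $3(1+r)^2\exp(\exp(7r))$; the crucial observation is that Corollary~\ref{cor:max_at_S_n} guarantees this quantity is already strictly less than $M_{f_n}(r)$, hence less than $a_ne_{2^n}(r)+O(2^{-4n}(1+r)^2)$, making it harmless. To pass from $f_{n+1}$ to $f$, note that for every $z$ with $|z|=r$ and every $k\geq n+2$, the estimate $\rho_{n+1}\ll r_k$ implies $B(0,r)\subset P_k^{-1}$, so property~2 gives $|f_k(z)-f_{k-1}(z)|\leq 2^{-4k}(1+r)^2$; telescoping and applying Observation~\ref{obs_N0}(d) again bound $\max_{|z|=r}|f(z)-f_{n+1}(z)|$ by less than $2^{-n}$, and we conclude $M_f(r)\leq M_{f_{n+1}}(r)+2^{-n}\leq a_ne_{2^n}(r)+2^{-n+1}$. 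Combining the two estimates and allowing slack of a factor of two to absorb constants yields $|M_f(r)-a_ne_{2^n}(r)|\leq 2^{-n+2}$ for all $n$ sufficiently large.
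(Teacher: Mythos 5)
Your proposal is correct and follows essentially the same route as the paper: lower bound by evaluating $f$ at a point of $S_n^{-1}$ where $|e_{2^n}|$ is maximal and telescoping with property~2; upper bound by splitting the circle into $S_{n+1}^{-1}$, $P_{n+1}^{-1}$, $F_{n+1}^{-1}$ and the pasting complement, controlling the last via property~3 together with Corollary~\ref{cor:max_at_S_n}, and comparing $a_{n+1}e_{2^{n+1}}(r)$ with $a_ne_{2^n}(r)$ via Observation~\ref{obs_rho}. The only cosmetic difference is that you first bound $M_{f_{n+1}}(r)$ and then pass to $f$, whereas the paper works with $f$ throughout by telescoping inside each region directly; the ingredients and estimates are the same.
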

	\begin{proof} Let $N$ be large enough so that Lemma \ref{lem:sequence_liminf}, Corollary \ref{cor:max_at_S_n}, and Observation~\ref{obs_N0} hold for all $n\ge N$. Fix $n\ge N$ and let $r$ be as in the statement. Then, by Lemma \ref{lem:sequence_liminf}, Corollary \ref{cor:max_at_S_n}, and Observation~\ref{obs_N0}(d),
\begin{equation}\label{eq_maxPn}
		\left\vert \underset{z\in P^{-1}_{n+1}\atop \abs z=r}\max\abs{f(z)}-  \underset{z\in P_{n+1}\inv\atop \abs z=r}\max\abs{f_n(z)}\right \vert = \left\vert \underset{z\in P^{-1}_{n+1}\atop \abs z=r}\max\abs{f(z)}-  \underset{z\in S_{n}\inv\atop \abs z=r}\max\abs{f_n(z)}\right \vert\leq (1+r)^2\sum_{j\geq n}\underset{z\in P^{-1}_{n+1}\atop \abs z=r}\max \vert f_{j+1}(z)-f_j(z)\vert \leq 2^{-n},
	\end{equation}
since $S^{-1}_{n}\subset P^{-1}_{n+1}$.

Note that for $r\in [\rho_n, r_{n+1}-1]$, by definition, $\{z \colon \vert z \vert =r\}\subset  P^{-1}_{n+1}$. On the other hand, if $r\in [r_{n+1}-1, \rho_{n+1}]$, $\{z \colon \vert z \vert =r\}$ might also intersect $S^{-1}_{n+1}$, $F^{-1}_{n+1}$, or the complement of these sets. Recall from Lemma \ref{lem:sequence_liminf} that
		$$
		\vert f_{n+1}(z)\vert \leq 	
		\begin{cases}
			(1+\vert z\vert)^2 \cdot 2^{-4(n+1)},& z\in F^{-1}_{n+1},\\ 
			3 \bb{1+\abs z}^2\exp\bb{\exp\bb{7 \abs z}}, & z\in\C\setminus\bb{P_{n+1}\inv\cup S_{n+1}\inv\cup F_{n+1}^{-1}}, \\
		\end{cases}
		$$
		and so, for $r\in [r_{n+1}-1, \rho_{n+1}]$, since $\{z \colon \vert z \vert =r\}\subset  P^{-1}_{n+2}$, using Lemma \ref{lem:sequence_liminf}, Corollary \ref{cor:max_at_S_n} and \eqref{eq_maxPn},
		\begin{align*}
			\underset{z\notin (P^{-1}_{n+1}\cup S^{-1}_{n+1})\atop \abs z=r}\max\abs{f(z)} &\leq  \underset{z\notin (P^{-1}_{n+1}\cup S^{-1}_{n+1})\atop \abs z=r}\max\abs{f_{n+1}(z)}+\sum_{j\geq n+1}\underset{z\in P^{-1}_{n+2}\atop \abs z=r}\max \vert f_{j+1}(z)-f_j(z)\vert \\
			&\leq \underset{z\in S_n\inv\atop \abs z=r}\max\abs{f_n(z)}-1+(1+r)^2\sum_{j\geq n+1}2^{-4j}\\
			&\leq
			\underset{z\in P_{n+1}\inv\atop \abs z=r}\max\abs{f_n(z)} -2^{-n}.
		\end{align*}
		Hence, we have  
		\begin{equation*}\label{eq_maxnotSn}
			\left\vert \underset{z\notin S^{-1}_{n+1}\atop \abs z=r}\max\abs{f(z)}-  \underset{z\in S_{n}\inv\atop \abs z=r}\max\abs{f_n(z)}\right \vert \leq 2^{-n}.
 		\end{equation*}
			Now, $ \underset{\abs z=r}\max \vert e_{2^n}(z)\vert=e_{2^n}(r)$ is attained at the point $re^{i\bb{t_n+2^{-n}2\pi \bb{\lfloor n/2 \rfloor+2/3}}}\in S^{-1}_{n}$, and therefore
		\begin{equation*}
			\left\vert \underset{z\notin S^{-1}_{n+1}\atop \abs z=r}\max\abs{f(z)}-a_n e_{2^n}(r)\right \vert \leq 	\left\vert \underset{z\notin S^{-1}_{n+1}\atop \abs z=r}\max\abs{f(z)}-  \underset{z\in S_{n}\inv\atop \abs z=r}\max\abs{f_n(z)}\right \vert+ 	\left\vert \underset{z\in S^{-1}_{n}\atop \abs z=r}\max\abs{f_n(z)} -a_n e_{2^n}(r) \right\vert \leq 2^{-n}+(1+r)^2\cdot 2^{-4n}<2^{-n+1}.
		\end{equation*}
		On the other hand, arguing similarly, since $S^{-1}_{n+1}\subset P^{-1}_{n+2}$,
	\begin{align*}
	\left\vert \underset{z\in S^{-1}_{n+1}\atop \abs z=r}\max\abs{f(z)}-a_{n+1} e_{2^{n+1}}(r)\right \vert &\leq 	\left\vert \underset{z\in S^{-1}_{n+1}\atop \abs z=r}\max\abs{f(z)}-  \underset{z\in S_{n+1}\inv\atop \abs z=r}\max\abs{f_{n+1}(z)}\right \vert+ 	\left\vert \underset{z\in S^{-1}_{n+1}\atop \abs z=r}\max\abs{f_{n+1}(z)} -a_{n+1}e_{2^{n+1}}(r) \right\vert \\
	&\leq (1+r)^2\sum_{j\geq n+1}2^{-4j}+(1+r)^2\cdot 2^{-4n+1}<  2^{-n+1}.
\end{align*}
It follows from these two inequalities that for any $r\in [\rho_n, \rho_{n+1}]$, 
\begin{equation*}
M_f(r)> \max\left\{ a_n e_{2^n}(r)-2^{-n+1}, a_{n+1}e_{2^{n+1}}(r)-2^{-n+1} \right\}>a_n e_{2^n}(r)-2^{-n+1},
\end{equation*}
while,  using Observation \ref{obs_rho} and Observation \ref{obs_N0}(d),
$$M_f(r) \leq \max\left\{ a_n e_{2^n}(r)+2^{-n+1}, a_{n+1}e_{2^{n+1}}(r)+2^{-n+1} \right\}\leq a_n e_{2^n}(r)+2\cdot 2^{-n+1},$$
and the statement follows readily.
\end{proof}
We can finally show that the function $f$ satisfies the requirements of Theorem \ref{thm:max_min_mod}.
\begin{lem}\label{lem_final}
The function $f$ satisfies the requirements of Theorem \ref{thm:max_min_mod}.
\end{lem}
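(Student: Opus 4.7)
The plan is to show that for every $\eps>0$ and every sufficiently large $n$ (depending on $\eps$), one has $v(r,\eps)\geq 2n$ and $w(r,\eps)\geq 2n-1$ for all $r\in[\rho_n,\rho_{n+1}]$; since the integer $n$ for which $r\in[\rho_n,\rho_{n+1}]$ tends to infinity with $r$, both limits in Theorem \ref{thm:max_min_mod} follow. The main inputs are Proposition \ref{prop_Mf} (giving $\abs{M_f(r)-a_n e_{2^n}(r)}\leq 2^{-n+2}$) together with a strengthened form of property~2 of Lemma \ref{lem:sequence_liminf}, obtained by combining it with the telescoping used in the proof that $\{f_n\}$ is Cauchy: for $z\in P_n\inv\cup S_n\inv\cup F_n\inv$ with $\abs z\leq \rho_{n+1}$,
\begin{equation*}
\abs{f(z)-h_n(z)}\;\leq\;(1+\rho_{n+1})^2\sum_{j\geq n-1}2^{-4j}\;\leq\;2^{-n},
\end{equation*}
using Observation \ref{obs_N0}(d). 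Since $h_n=a_n e_{2^n}$ on $S_n\inv$ and $h_n\equiv 0$ on $F_n\inv$, this pins down $f$ on these sets up to error $2^{-n}$.

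The arc count then follows from the explicit angular profile $\abs{a_n e_{2^n}(re^{i\theta})}=a_n\exp(r^{2^n}\cos(2^n\theta))$. The maxima occur at $\theta_k=2\pi k/2^n$ ($k$ integer), where $\cos(2^n\theta_k)=1$; the midpoints $\theta_k+\pi/2^n$ satisfy $\cos(2^n\theta)=-1$, so $a_n\abs{e_{2^n}}\leq a_n\exp(-r^{2^n})\ll 2^{-n}$. Each of the two components of $S_n\inv\cap\{\abs z=r\}$ spans an angular interval of width essentially $\Theta_n=2\pi\cdot 2^{-n}(n+1/6)$, and hence contains at least $n$ such angles $\theta_k$ and $n-1$ intermediate midpoints. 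Using the approximations above, each $\theta_k$ produces a non-degenerate arc on which $\abs f>M_f(r)-\eps$, and each midpoint an arc on which $\abs f<\eps$. This yields $2n$ disjoint arcs in $\{\abs f>M_f(r)-\eps\}$ and $2(n-1)$ disjoint arcs in $\{\abs f<\eps\}$ inside $S_n\inv$; the single arc $F_n\inv\cap\{\abs z=r\}$ around $\theta=0$, on which $\abs f\leq 2^{-n}<\eps$, adds one more component to $\{\abs f<\eps\}$, giving $w(r,\eps)\geq 2n-1$.

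The delicate point is to verify that the counted arcs remain distinct and are not merged through the remaining regions. Within $S_n\inv$, consecutive max arcs are separated by the explicit midpoint small arcs, and the $F_n\inv$ small arc is separated from the interior small arcs in $S_n\inv$ by at least one max arc. A useful direct computation shows $\cos(2^n(t_n+\Theta_n))=-1$ and $\cos(2^n t_n)=-1/2$, so the first and last $\theta_k$ in each component of $S_n\inv$ stay strictly in the interior of $[t_n,t_n+\Theta_n]$, which prevents max arcs from leaking out of $S_n\inv$. To exclude spurious mergings through $\C\setminus(S_n\inv\cup F_n\inv)$, where only the weak bounds of property~3 of Lemma \ref{lem:sequence_liminf} apply directly, one uses Proposition \ref{prop_firstbound} applied recursively: on $P_n\inv$ this gives $\abs f\leq a_{n-1}e_{2^{n-1}}(r)+O(2^{-n})$, which by Observation \ref{obs_rho} is strictly less than $M_f(r)\approx a_n e_{2^n}(r)$, so any additional max arcs appearing there only add to $v(r,\eps)$. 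On the transition strips the crude bound from property~3 suffices once combined with continuity of $\abs f$, which forces crossing of every intermediate level between adjacent arcs. The main obstacle will be carrying out the separation carefully at transitional radii $r\approx\rho_n$, where $a_n e_{2^n}$ and $a_{n-1}e_{2^{n-1}}$ both nearly realize $M_f(r)$; Observation \ref{obs_rho} and the strict monotonicity of $J_n$ guarantee the dominance of $a_n e_{2^n}$ for $r>\rho_n$ by a positive margin, which resolves this.

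Finally, for the infinite lower order claim of the accompanying remark: Proposition \ref{prop_Mf} together with Observation \ref{obs_rho} gives $M_f(r)\geq a_{n-1}e_{2^{n-1}}(r)-O(2^{-n})$ for $r\in[\rho_n,\rho_{n+1}]$. By Observation \ref{obs_N0}(a), $\rho_n>10\cdot 2^n$ while $r_{n-1}+3<11\cdot 2^{n-1}+3$, so $(r_{n-1}+3)/\rho_n$ is bounded above by an absolute constant strictly less than $1$, whence $\rho_n^{2^{n-1}}-(r_{n-1}+3)^{2^{n-1}}\geq \frac12\rho_n^{2^{n-1}}\geq \frac12(10\cdot 2^n)^{2^{n-1}}$. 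Consequently $\log M_f(r)\geq \frac14(10\cdot 2^n)^{2^{n-1}}$; since $\log r\leq O(n)$ on $[\rho_n,\rho_{n+1}]$, this yields $\log\log M_f(r)/\log r\geq c\cdot 2^{n-1}\to\infty$ as $r\to\infty$, confirming the infinite lower order.
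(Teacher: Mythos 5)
Your approach is essentially the paper's: locate the angular maxima of $a_n e_{2^n}$ inside $S_n^{-1}$, the angular minima between them, transfer both to $f$ via the telescoped form of property~2, and count alternating arcs. The paper implements this by taking the arcs
$I_k^n(r)=\{re^{it}: t\in(t_n+2\pi k 2^{-n},\,t_n+2\pi(k+1)2^{-n})\}$ for $k=1,\dots,n-2$ (so at every endpoint $\cos(2^n\theta)=-1/2$, hence $\abs{e_{2^n}}=e^{-r^{2^n}/2}$, and each arc has an interior point $t_n+2\pi(k+2/3)2^{-n}$ where $\cos(2^n\theta)=1$); your $\theta_k$'s and midpoints are the same points up to shift, and you simply count both halves of $S_n$ plus $F_n^{-1}$, which roughly doubles the constant but does not change the argument. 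The strengthened property~2 you use, obtained by summing the telescoping series and invoking Observation~\ref{obs_N0}(d), is exactly what the paper does in the displayed estimates.

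One thing you make harder than it needs to be — and where your claimed resolution is not quite right — is the ``spurious mergings'' issue. You appeal to Proposition~\ref{prop_firstbound} recursively to get $\abs f\leq a_{n-1}e_{2^{n-1}}(r)+O(2^{-n})$ on $P_n^{-1}$ and then assert, via Observation~\ref{obs_rho}, a ``positive margin'' below $M_f(r)$. But that margin is only $-J_n(r)+3(1+r)^2 2^{-4(n-1)}$, which at $r=\rho_n$ equals $3(1+\rho_n)^2 2^{-4(n-1)}=O(4^{-n})$ and is eventually smaller than any fixed $\eps$; combined with the $2^{-n+2}$ uncertainty in Proposition~\ref{prop_Mf}, it does not rule out $\abs f>M_f(r)-\eps$ somewhere in $P_n^{-1}$ near $r\approx\rho_n$. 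This step, as written, does not hold. Fortunately it is also unnecessary: once you know that each chosen arc has $\eps$-small endpoints and an $\eps$-near-maximal interior point, the component of $\{\abs f>M_f(r)-\eps\}$ through that interior point is trapped between its own endpoints, and the arcs give distinct components regardless of what happens elsewhere on the circle. The paper phrases the sufficiency condition precisely this way and therefore never needs a global upper bound on $\abs f$ outside $S_n^{-1}$. You should replace the recursive-Proposition-\ref{prop_firstbound} paragraph with this trapping observation; with that change, and allowing the arc count per sector half to be $n-1$ rather than $n$ (the uppermost midpoint $\theta_{3n+4}+\pi/2^n$ lands exactly on $t_n+\Theta_n$ and so lies outside $S_n^{-1}$, so one needs either the $\cos=-1/2$ barrier at $t_n+2\pi n/2^n$ or to drop the last maximum), the argument closes, matching the paper.

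The final paragraph on infinite lower order is not part of Lemma~\ref{lem_final} (it justifies the Remark), but the computation there is correct and is the same one the paper alludes to.
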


\begin{proof}
	Fix $\eps>0$, and let $N$ be chosen at the end of the proof. We may assume without loss of generality that $N$ is large enough such that for all $n\ge N$ and $r\in[\rho_n, \rho_{n+1}]\subset\bb{10\cdot 2^n+3,22\cdot 2^n+3}$, 
	\begin{equation} \label{eq_n_geq_eps}
		\exp\bb{-\frac{r^{2^n}}2}+ \frac{2 r^2}{2^{4n}}+\frac{1}{2^{n-2}}<\exp\bb{-\frac{(10\cdot 2^n+3)^{2^n}}2}+ \frac{2(22\cdot 2^{n}+3)^2}{2^{4n}}+\frac{1}{2^{n-2}}<\eps.
	\end{equation}
	In order to prove the lemma, it suffices to show that for every $n>N$ and $r>\rho_n$ there are at least $n-2$ disjoint arcs in $\{\vert z\vert=r\}$ whose boundary points satisfy $\abs{f(w)}<\eps$ and each of them contains at least one point where $\abs{f(z)}\geq M_f(r)-\eps$.
	
	For every $n\geq N$, $r\in\sbb{\rho_n,\rho_{n+1}}$, and $0\le k\le n-1$ we define the arcs
	$$
	I_k^n(r):=\bset{re^{it}, t\in\bb{t_n+2\pi k\cdot 2^{-n}, t_n+2\pi (k+1)\cdot 2^{-n}}}.
	$$
	Note that as long as $k\in\bset{1,\ldots,n-1}$ we see that
	$$
	t_n+\Theta_n=t_n+2\pi\cdot2^{-n}\cdot\bb{n+\frac{1}6}\ge t_n+2\pi n\cdot 2^{-n}\ge  t_n+2\pi (k+1)\cdot 2^{-n}.
	$$
	In particular, if $N$ is chosen large enough, for every $n\geq N$ and $r\geq \rho_n$,  $I_1^n(r),\cdots,I_{n-2}^n(r)\subset S^{-1}_n$. 
	
	Since the intervals $I^n_j(r)$ are contained in $S_{n}^{-1}\subset P_{\nu}^{-1}$ for $\nu > n$, 
	$$
	(1+r)^2 \cdot 2^{-4\nu}\geq 	\begin{cases}
		\abs{f_n\bb{re^{i\bb{t_n+2\pi k\cdot 2^{-n}}}}-h_n\bb{re^{i\bb{t_n+2\pi k\cdot 2^{-n}}}}},& \nu=n,\\
		\abs{f_{\nu}\bb{re^{i\bb{t_n+2\pi k\cdot 2^{-n}}}}-f_{\nu-1}\bb{re^{i\bb{t_n+2\pi k\cdot 2^{-n}}}}},& \text{otherwise}.
	\end{cases}
	$$
	
	We conclude that if $r\in\sbb{\rho_n,\rho_{n+1}}$, then at the end-points of these arcs,
	\begin{align*}
		\abs{f\bb{re^{i\bb{t_n+2\pi k\cdot 2^{-n}}}}}&\le \abs{f_n\bb{re^{i\bb{t_n+2\pi k\cdot 2^{-n}}}}}+\sumit \nu n\infty\abs{f_\nu\bb{re^{i\bb{t_n+2\pi k\cdot 2^{-n}}}}-f_{\nu+1}\bb{re^{i\bb{t_n+2\pi k\cdot 2^{-n}}}}}\\
		&\le  \abs{h_n\bb{re^{i\bb{t_n+2\pi k\cdot 2^{-n}}}}}+\bb{1+r}^2\sumit \nu n\infty 2^{-4\nu}\le \exp\bb{-\frac{r^{2^n}}2}+2r^2\cdot 2^{-4n}<\eps,
	\end{align*}
	since, for every $n$ for which Observation \ref{obs_N0} holds,
	$$
	\abs{e_{2^n}\bb{re^{i\bb{t_n+2\pi k\cdot 2^{-n}}}}}=\exp\bb{r^{2^n}\cos\bb{2^n\bb{t_n+2\pi k\cdot 2^{-n}}}}=\exp\bb{r^{2^n}\cos\bb{2^n\cdot t_n}}=\exp\bb{-\frac{r^{2^n}}2}.
	$$
	On the other hand, for every $1\leq k\leq n-2$, $re^{i\bb{t_n+2^{-n}2\pi \bb{k+2/3}}}\in I_k^n(r)$ while
	\begin{align*}
		\abs{f\bb{re^{i\bb{t_n+2^{-n}2\pi \bb{k+2/3}}}}}&\ge \abs{f_n\bb{re^{i\bb{t_n+2^{-n}2\pi \bb{k+2/3}}}}}-\sumit \nu n\infty\abs{f_\nu\bb{re^{i\bb{t_n+2^{-n}2\pi \bb{k+2/3}}}}-f_{\nu+1}\bb{re^{i\bb{t_n+2^{-n}2\pi \bb{k+2/3}}}}}\\
		&\ge  \abs{h_n\bb{re^{i\bb{t_n+2^{-n}2\pi \bb{k+2/3}}}}}-\bb{1+r}^2\sumit \nu n\infty 2^{-4\nu}=a_n\exp\bb{r^{2^n}}-2r^2\cdot 2^{-4n}\\
		&\ge M_f(r)-\eps,
	\end{align*}
	following \eqref{eq_n_geq_eps} and Proposition \ref{prop_Mf}  as long as $n$ is large enough. To conclude the proof, it is enough to choose $N$ so that  \eqref{eq_n_geq_eps}, Observation \ref{obs_N0}, and Proposition \ref{prop_Mf} hold.
\end{proof}

%%%%%%%%%%%%%%%%%%%%%%%%%%%%%%%%%%%%%%%%%%%%%%%%%%%%%%%%%%%%%%%%%%%%%%%%%%%%%%%%%%%%%%%%%%%%%%%%%%%%%%%%%%%%%%%%%%%%%%%%%%%%%%%%%%%%%%%%%%%%%%%%%%%%%%%%%%%%%%%%%%%%%%%%%%%%%%%%%%%%%%%%%%%%%%%%%%%%%%%%%%%%%%%%%%%%%%%%%%%%%%%%%%%%%%%%%%%%%%%%%%%%%%%%%%%%%%%%%%%%%%%%%%%%%%%%%%%%%%%%%%%%%%%%%%%%%%%%%%%%%%%%%%%%%%%%%%%%%%%%%%%%%%%%%%%%%%%%%%%%%%%%%%%%%%%%%%%%%%%%%%%%%%%%%%%%%%%%%%%%%%%%%%%%%%%%%%%%%%%%%%%%%%%%%%%%%%%%%%%%%%%%%%%%%%%%%%%%%%%%%%%%%%%%%%%%%%%%%%%%%%%%%%%%%%%%%%%%%%%%%%%%%%%%%%%%%%%%%%%%%%%%%%%%%%%%%%%%%%%%%%%%%%%%%%%%%%%%%%%%%%%%%%%%%%%%%%%%%%%%%%%%%%%%%%%%%%%%%%%%%%%%%%%%%%%%%%%%%%%%%%%%%%%%%%%%%%%%%%%%%%%%%%%%%%%%%%%%%%%%%%%%%%%%%%%%%%%%%%%%%%%%%%%%%%%%%%%%%%%%%%%%%%%%%%%%%%%%%%%%%%%%%%%%%%%%%%%%%%%%%%%%%%%%%%%%%%%%%%%%%%%%%%%%%%%%%%%%%%%%%%%%%%%%%%%%%%%%%%%%%%%%%%%%%%%%%%%%%%%%%%%%%%%%%%%%%%%%%%%%%%%%%%%%%%%%%%%%%%%%%%%%%%%%%%%%%%%%%%%%%%%%%%%%%%%%%%%%%%%%%%%%%%%%%%%%%%%%%%%%%%%%%%%%%%%%%%%%%%%%%%%%%%%%%%%%%%%%%%%%%%%%%%%%%%%%%%%%%%%%%%%%%%%%%%%%%%%%%%%%%%%%%%%%%%%%%%%%%%%%%%%%%%%%%%%%%%%

\bigskip\bigskip\bigskip\bigskip\bigskip\bigskip

\noindent A.G.: Department of Mathematics, Northwestern University, Evanston, IL 60208, United States
\newline{\tt https://orcid.org/0000-0002-6957-9431}
\newline{\tt adiglucksam@gmail.com}

\bigskip\bigskip
\noindent L.P.S.: Department of Mathematics, The University of Manchester, Manchester, M13 9PL, United Kingdom 
\newline {\tt https://orcid.org/0000-0003-4039-5556}
\newline{\tt leticia.pardosimon@manchester.ac.uk} 

\end{document}